\newtheorem{Lem}{Lemma}[section]
\newtheorem{Prop}[Lem]{Proposition}
\newtheorem{Cor}[Lem]{Corollary}
\newtheorem{Thm}[Lem]{Theorem}
\newtheorem{Def}[Lem]{Definition}
\newtheorem{Rem}[Lem]{Remark}
\newtheorem{Expl}[Lem]{Example}
\DeclareMathOperator{\Tor}{Tor}
\DeclareMathOperator{\Mon}{Mon}
\DeclareMathOperator{\supp}{supp}
\DeclareMathOperator{\set}{set}
\DeclareMathOperator{\cx}{cx}
\DeclareMathOperator{\depth}{depth}
\DeclareMathOperator{\m}{m}
\newcommand{\NN}{\mathbb{N}}
\newcommand{\ZZ}{\mathbb{Z}}
\newcommand{\RR}{\mathbb{R}}
\def\qed{\ifhmode\textqed\fi
	\ifmmode\ifinner\hfill\quad\qedsymbol\else\dispqed\fi\fi}
\def\textqed{\unskip\nobreak\penalty50
	\hskip2em\hbox{}\nobreak\hfill\qedsymbol
	\parfillskip=0pt \finalhyphendemerits=0}
\def\dispqed{\rlap{\qquad\qedsymbol}}
\begin{document}
\title{Mapping cones of monomial ideals over exterior algebras}
\author{Marilena Crupi, Antonino Ficarra, Ernesto Lax}

\address{Marilena Crupi, Department of mathematics and computer sciences, physics and earth sciences, University of Messina, Viale Ferdinando Stagno d'Alcontres 31, 98166 Messina, Italy}
\email{mcrupi@unime.it}

\address{Antonino Ficarra, Department of mathematics and computer sciences, physics and earth sciences, University of Messina, Viale Ferdinando Stagno d'Alcontres 31, 98166 Messina, Italy}
\email{antficarra@unime.it}

\address{Ernesto Lax, Department of mathematics and computer sciences, physics and earth sciences, University of Messina, Viale Ferdinando Stagno d'Alcontres 31, 98166 Messina, Italy}
\email{erlax@unime.it}

\subjclass{13D02, 13C13, 15A75}
\keywords{Exterior algebra, free resolutions, monomial ideals, mapping cones}

\begin{abstract} 
	Let $K$ be a field, $V$ a finite dimensional $K$-vector space and $E$ the exterior algebra of $V$. We analyze iterated mapping cone over $E$. If $I$ is a monomial ideal of $E$ with linear quotients, we show that the mapping cone construction yields a minimal graded free resolution $F$ of $I$ via the Cartan complex. Moreover, we provide an explicit description of the differentials in $F$ when the ideal $I$ has a regular decomposition function. Finally, we get a formula for the graded Betti numbers of a new class of monomial ideals including the class of strongly stable ideals.
\end{abstract}

\maketitle

\section{Introduction}
A popular topic in combinatorial commutative algebra is to detect minimal graded free resolutions of classes of monomial ideals (see, for instance, \cite{DM, IP} and the reference therein). In \cite{JTak}, Herzog and Takayama have shown that one can iteratively construct a free resolution of any monomial ideal in a polynomial ring by considering the mapping cone of the map between certain complexes, adding one generator at a time. Especially when the ideal has linear quotients, the complexes and the maps are well--behaved, and the procedure determines a minimal resolution \cite{JTak} by means of the Koszul complex. Some meaningful examples of free resolutions which arise as iterated mapping cones are the Eliahou--Kervaire resolution of stable ideals \cite{EK} (see, also, \cite{EC}) and the Taylor resolution. \\
Let $K$ be a field and let $E=K\left\langle e_1,\ldots , e_n\right\rangle$ be the exterior algebra of a $K$-vector space $V$ with basis $e_1,\ldots,e_n$. 
A fundamental tool in the study of minimal graded free resolutions of graded $E$-modules is played by the Cartan complex \cite{AHH}. It is an infinite complex which has the structure of a divided power algebra, and furthermore, as in the case of the ordinary Koszul complex over the polynomial ring, one may compute the Cartan homology of an
$E$-module using long exact sequences attached to the partial sequences $e_1,\ldots, e_i$.  \\
The aim of this article is to determine a minimal graded free resolution $F$ of a monomial ideal in $E$ with linear quotients by iterated mapping cones and, in particular, to describe explicitly the free $E$-modules which appear in $F$ and thus to get information on the graded Betti numbers of the given ideal. 
Our main tool is the Cartan complex. As in the polynomial case \cite{JTak}, we are able to provide an explicit description of the differentials in $F$ for a special class of ideals with linear quotients which satisfy an additional condition.  A significant example of monomial ideals of $E$ with linear quotients is the class of stable ideals whose minimal graded free resolution has been completely described by Aramova, Herzog and Hibi in \cite{AHH} (see, also, \cite{AAH1}). Recently, minimal graded free resolutions in the context of skew commutative algebras have been studied in \cite{FH,EEE22}. Moreover, new meaningful results related to iterated mapping cone
procedure have been recently carried out in \cite{KV}, where the author has replaced the
polynomial ring by any commutative strongly Koszul algebra.\\
The plan of the article is as follows. Section \ref{sec:2}, contains some notions that will be used throughout the article. 
In Section \ref{sec3}, we discuss the mapping cone over the exterior algebra $E$. We prove that if $I$ is a monomial ideal with linear quotients, the mapping cone construction determines a minimal graded free resolution of $I$,  and thus we describe explicitly a basis for each free module in the resolution (Theorem \ref{prop:iterated}). A crucial fact is the exterior version of the ``Rearrangement lemma" of Bjorner and Wachs \cite{BW}. Hence, if $G(I)$ is the minimal set of monomial generators of $I$, analyzing some special sets of positive integers that can be associated to $G(I)$, we get a formula for the graded Betti numbers of $I$ (Corollary \ref{Cor:formulaBetti}) via the notion of weak composition (Definition \ref{def:weak}). Then, we obtain  some formulas for the graded Poincar\'e series, the complexity and the depth of $I$ (Corollaries \ref{cor:poinc}, \ref{cor:compldepth}). In Section \ref{sec4}, we introduce the notion of regular decomposition function (Definition \ref{def:regfunc}) of an ideal of $E$ with linear quotients. Our main result is Theorem \ref{thm:main1} that gives an explicit description of the differentials of the resolutions of all ideals of $E$ with linear quotients which admit a regular decomposition function. Finally, in Section \ref{sec5}, firstly, we introduce the class of  ${\bf t}$-spread strongly stable ideals in $E$ (Definition \ref{def:stronspread}), then we discuss their main properties and, as a consequence, we compute their graded Betti numbers (Corollary \ref{cor:formula}) via the results in Section \ref{sec3}. Such a formula generalizes the one stated in \cite{AHH} for the ordinary strongly stable ideal in $E$.\\
All examples in this article have been verified computationally using the software Macaulay2 \cite{GDS} and the package \cite{AC}.

\section{Preliminaries and notation}\label{sec:2}
Let $K$ be a field. We denote by $E=K\left\langle e_1,\ldots , e_n\right\rangle$ the exterior algebra
of a $K$-vector space $V$ with basis $e_1,\ldots,e_n$.

As a $K$-algebra $E$ is standard graded with defining relations $v\wedge v=0$, for $v\in E$ and $e_i\wedge e_j= - e_j\wedge e_i$. 
For any subset $\mu=\{i_1,\ldots,i_d\}$ of $\{1,\dots,n\}$ with $i_1<i_2< \cdots < i_d$ we write $e_\mu=e_{i_1} \wedge \ldots \wedge e_{i_d}$, and call $e_{\mu}$ a \emph{monomial} of degree $d$.  We set $e_{\mu}=1$, if $\mu = \emptyset$. Moreover, setting $\sigma(\tau, \mu)=\vert \{(i, j): i \in \tau, j\in \mu, i>j\}\vert$, for any $\tau,\mu \subseteq \{1,\dots,n\}$, we have $e_\tau \wedge e_\mu = (-1)^{\sigma(\tau, \mu)}e_{\tau\cup \mu}$, if $\tau\cap \mu=\emptyset$ and $e_\tau \wedge e_\mu=0$, otherwise. The set of monomials in $E$ forms a $K$-basis of $E$ of cardinality $2^n$. An element $f \in E$ is called \textit{homogeneous} of degree $j$ if $f \in E_j$, where $E_j=\bigwedge^j V$. From the previous relations, one has that $f\wedge g= (-1)^{\deg(f)\deg(g)}g\wedge f$ for any two homogeneous elements $f$ and $g$ in $E$ (see, for instance, \cite{JT}). In order to simplify the notation, we put $fg=f \wedge g$ for any two elements $f$ and $g$ in $E$.

We denote by $\mathcal{G}$ the category of graded $E$-modules whose morphisms are the homogeneous $E$-module homomorphisms of degree $0$ \cite{JT}.
Every $E$-module $M \in \mathcal{G}$ has a minimal graded free resolution $F$ over $E$:
\[
F: \ldots \to F_2 \stackrel{\,\,d_2 \,\,}{\rightarrow} F_1 \stackrel{\,\, d_1 \,\,}{\rightarrow} F_0 \to M \to 0,
\]
where $F_i=\oplus_jE(-j)^{\beta_{i,j}(M)}$. The integers $\beta_{i,j}(M)=\dim_K\Tor_{i}^{E}(M,K)_j$ are called the \textit{graded Betti numbers} of $M$, whereas the numbers $\beta_{i}(M)=\sum_j\beta_{i,j}(M)$ are called the \textit{ith total Betti numbers} of $M$.
Moreover, the \emph{graded Poincar\'e series} of $M$ over $E$ is defined as $\mathcal{P}_M(s, t)= \sum_{i,j\ge 0}\beta_{i, j}(M)t^is^j$ \cite{AHH}.\\

A graded ideal $I$ of $E$ is a monomial ideal if $I$ is generated by monomials.

Let $e_\mu=e_{i_1} \cdots e_{i_d}\neq 1$ be a monomial in $E$. We define
\[
\supp(e_\mu)= \{\mbox{$i$\,:\,$e_i$ divides $e_\mu$}\} = \mu
\]
and we write
\[
\m(e_\mu)=\max\{i: i \in \supp(e_\mu)\}= \max\{i: i \in \mu\}.
\]
We set $\m(e_\mu)=0$, if $e_\mu=1$.

\begin{Def}\em
	Let $I$  be a monomial ideal of $E$. $I$ is called \textit{stable} if for each monomial $e_{\mu}\in I$ and each $j < \m(e_{\mu})$ one has $e_j e_{{\mu} \setminus \{\m(e_{\mu})\}} \in I$.
	$I$ is called \textit{strongly stable} if for each monomial $e_{\mu} \in I$ and each $j \in \mu$ one has $e_ie_{\mu \setminus \{j\}} \in I$ for all $i<j$.
\end{Def}

If $I$  is a monomial ideal in $E$, we denote by $G(I)$ the unique minimal set of monomial generators of $I$, and by  $G(I)_d$ the set of all monomials $u\in G(I)$ such that $\deg(u)=d$, $d>0$. 
One can verify that the defining property of a strongly stable ideal only needs to be checked on the generators (see, for instance \cite[Remark 2.2]{AC}).\\

We close this section with some comments. As in the polynomial ring, every element of $E$ of the type $ce_\mu$, with $c\in K$ and $e_\mu$ a monomial, is called a \emph{term}. In what follows, to avoid ambiguity and with the aim of simplifying the computations, we refer to any term of the form $ce_{\mu}$ with $c\in\{-1, 1\}$,  as a monomial. In other words we generalize the classical notion of monomial recalled at the beginning of the section. This choice does not affect the classical development of the exterior algebra theory.

\subsection{A glimpse to the Cartan Complex}\label{sub:Cartan}
The Cartan complex for the exterior algebra $E=K\left\langle e_1,\ldots , e_n\right\rangle$ plays the role of the Koszul complex for the symmetric algebra.
We refer to \cite{AHH, JT} for more details on this subject.

Let $\NN$ be the set of all non negative integers and for a positive integer $n$, let \linebreak$[n] = \{1, \ldots, n\}$. Moreover, for a $n$-tuple $a= (a_1, \ldots, a_n)\in \NN^n$, let $\vert a\vert = a_1+ \cdots + a_n$.

Let $v = v_1,\ldots, v_m$ be a sequence of linear forms in $E_1$. The Cartan complex $C.(v;E)$ of the sequence $v$ with values in $E$ is defined as the complex
whose $i$-chains $C_i(v;E)$ are the elements of degree $i$ of the free divided power algebra $E\langle x_1, \ldots, x_m\rangle$, where $E\langle x_1, \ldots, x_m\rangle$ is the polynomial ring over $E$ in the set of variables
\[x_i^{(j)}, \quad i=1, \ldots, m,\quad j=1,2, \ldots\]
modulo the relations
\[x_i^{(j)}x_i^{(k)} = \binom{j+k}jx_i^{(j+k)}.\]
We set $x_i^{(0)}=1$, $x_i^{(1)}=x_i$ for $i=1, \ldots, m$ and $x_i^{(a)}=0$ for $a<0$. The algebra $E\langle x_1, \ldots, x_m\rangle$ is a free $E$-module with basis
\[x^{(a)} = x_1^{(a_1)}x_2^{(a_2)}\cdots x_m^{(a_m)}, \quad a = (a_1, \ldots, a_m)\in \NN^m.\]
We say that $x^{(a)}$ has degree $i$ if $|a| = i$.
If $x^{(a)}\neq 1$, we define $\supp(x^{(a)}) = \{i\in [m] : a_i\neq 0\}$ and set $\supp(x^{(a)})=\emptyset$, for 
$x^{(a)}=1$.\\

One has $C_i(v;E)= \oplus_{|a|=i} Ex^{(a)}$. The $E$-linear differential $\partial$ on $C.(v;E)$ is defined as follows: for $x^{(a)} =x_1^{(a_1)}x_2^{(a_2)}\cdots x_m^{(a_m)}$, we set
\[\partial(x^{(a)}) = \sum_{i=1}^mv_i x_1^{(a_1)}\cdots x_i^{(a_i-1)}\cdots x_m^{(a_m)}.\]

If $\mathcal{G}$ is the category of graded $E$-modules above defined and $M\in \mathcal{G}$, one can define the complex
$C.(v;M) = M\otimes_E C.(v;E)$ and set $H_i(v;M) = H_i(C.(v;M))$. We call $H_i(v;M)$ the $i$th Cartan homology
module of $v$ with respect to $M$. One can observe that each $H_i(v;M)$ is a graded $E$-module. In \cite[Theorem 2.2]{AHH}, the authors showed that the Cartan complex $C.(v;E)$ is a minimal free resolution of $E/(v)$ and, as a consequence, they proved that 
for any graded $E$-module $M$ and each $i\ge 0$, there is the natural isomorphism $\Tor_i^E(M,K)\cong H_i(e_1,\ldots, e_n;M)$ of graded $E$-modules. Such a result allows to compute the graded Betti numbers of $M$.

The next important formula for determining the graded Betti numbers of a stable ideal $I$ of $E$ has been stated by Aramova, Herzog and Hibi via the Cartan complex \cite[Corollary 3.3]{AHH}:
\begin{equation}\label{eq:Bettistable}
\beta_{i, i+j}(I) = \sum_{u\in G(I)_j}\binom{\m(u)+i-1}{\m(u)-1}, \quad \mbox{for all $i\ge 0$}.
\end{equation}
Indeed, for every $i>0$, a basis of the modules $H_i(e_1,\ldots, e_n;E/I)$ is given by the homology classes of the cycles 
\[u_{\m(u)}x^{(a)}, \,\, u\in G(I),\,\, \vert a \vert =i,\,\,  \max(a) = \m(u),\] 
where $\max(a) = \max\supp(x^{(a)})= \max\{i\in [n]: a_i\neq 0\}$ ($a\in \NN^n$) and $u_{\m(u)}=e_{\mu\setminus\{\m(u)\}}$, for $u=e_\mu$.

\subsection{A glimpse to ideals with linear quotients} \label{subsec2}
Monomial ideals with linear quotients have been introduced by Herzog and Takayama
\cite{JTak} to study resolutions that arise as iterated mapping cones. 

\begin{Def}\em
	A  monomial ideal $I$ of $E$ is said to have \textit{linear quotients} if for some order $u_1,\ldots, u_r$ of the elements of $G(I)$ the ideals $0 : (u_1)$ and $(u_1, \ldots, u_{i-1}):(u_i)$, for $i = 2, \ldots, r$, are generated by a subset of the set $\{e_1, \ldots, e_n\}$.
\end{Def}

\begin{Rem}\em
	Differently from the definition of linear quotients over the polynomial ring, we need the condition $0 : (u_1)$ has to be generated by a subset of $\{e_1, \ldots, e_n\}$. This is always the case, since $0 : (u_1)= (\supp(u_1))$. 
\end{Rem}

The ideal $I = (e_2, e_3e_4)$ of $E = K\langle e_1, \ldots, e_4\rangle$ has linear quotients with respect to this order of the generators. In fact,
$0 :(e_2) = (e_2)$, and $(e_2):(e_3e_4) = (e_2, e_3, e_4)$.
But $I$ does not satisfy the conditions for linear quotients, when employing the reversed order on the generators. Indeed, for the order $e_3e_4,e_2$, we have $0:(e_3e_4)=(e_3, e_4)$ and $(e_3e_4):(e_2) = (e_2, e_3e_4)$.\\

Following \cite{JTak}, for a monomial ideal $I$ of $E$ with linear quotients with respect to some order of the elements $u_1, \ldots, u_r$ of $G(I)$, we define
\[
\set(u_j) =\{k\in [n] : e_k\in (u_1, \ldots, u_{j-1}):(u_j)\},\quad \mbox{for $j=2, \ldots, r$}.
\]
Moreover, we set
\[
\set(u_1)= \{k\in [n] : e_k\in 0:(u_1)\}= \supp\, u_1.
\]

One can observe that in contrast to the polynomial ring context, $\set(u)$ contains $\supp(u)$ for every $u\in G(I)$.

\begin{Rem}\em \label{ex:stable}
	A stable ideal $I$ of $E$ has linear quotients with respect to the reverse degree lexicographical order on the generators. Moreover, $\set(u) = \{i : i \le \m(u)\}$, \emph{i.e.}, $\set(u) = [\m(u)]$, for every $u\in G(I)$.
	
	For instance, let us consider the stable ideal  $I= (e_1e_2, e_1e_3, e_2e_3, e_3e_4e_5)$ of $E = K\langle e_1, \ldots, e_5\rangle$. Then $I$ has linear quotients with respect to the reverse lexicographic order $e_1e_2,e_1e_3,e_2e_3,e_3e_4e_5$. Indeed, $0:(e_1e_2) = (e_1, e_2)$, $(e_1e_2):(e_1e_3) = (e_1, e_2, e_3)$, $(e_1e_2, e_1e_3):(e_2e_3)= (e_1, e_2, e_3)$ and $(e_1e_2, e_1e_3, e_2e_3):(e_3e_4e_5)= (e_1, e_2, e_3, e_4,e_5)$. Therefore, $\set(e_1e_2) = \{1,2\}$, $\set(e_1e_3) = \{1,2,3\}$, $\set(e_2e_3) = \{1,2, 3\}$, $\set(e_3e_4e_5) = \{1,2, 3, 4, 5\}$, and so
	$\vert \set(u)\vert = \m(u)$, for all $u\in G(I)$.
\end{Rem}

Finally, we have the following hierarchy of monomial ideals in $E$:
\[
\mbox{strongly stable ideals \,\, $\Rightarrow$\,\, stable ideals \,\, $\Rightarrow$\,\, ideals with linear quotients}
\]

Let $I$ be a monomial ideal of $E$ with linear quotients with respect to a minimal set of monomial generators $\{u_1, \ldots, u_r\}$. We observe that not necessarily $\deg u_1 \le \cdots \le \deg u_r$. Let us consider the ideal $I = (e_1e_2, e_2e_3e_4, e_1e_3)$ of $E=K\langle e_1, \ldots, e_4\rangle$. $I$ has linear quotients for the given order of the generators. In fact , $0:(e_1e_2) = (e_1, e_2)$, $(e_1e_2):(e_2e_3e_4) = (e_1, e_2, e_3, e_4)$ and $(e_1e_2, e_2e_3e_4):(e_1e_3)= (e_1, e_2, e_3)$.\\

Now, let $I\subset E$ be a monomial ideal with linear quotients with respect to homogeneous generators $u_1, \ldots, u_r$. We say that the order $u_1, \ldots, u_r$ is \emph{degree increasing} if $\deg(u_1)\le \dots \le\deg(u_r)$. By using exterior algebra methods \cite[Lemma 4.2.1]{Tthesis}, one can prove that if $I\subset E$ is a monomial ideal with linear quotients then $I$ has also linear quotients with respect to a degree increasing order. Such a statement can be seen as the exterior version of the ``Rearrangement lemma" of Bjorner and Wachs \cite{BW}. 

Hence, from now on when we say that a monomial ideal $I\subset E$ has linear quotients with respect to some order of $G(I)= \{u_1, \ldots,u_r\}$, we assume tacitly that such an order is a degree increasing order, \emph{i.e.}, $\deg(u_1)\le\dots\le\deg(u_r)$.\\

We close the section with some comments on resolutions of monomial ideals with linear quotients.

It is well--known that in the polynomial ring case, monomial ideals generated in one degree $d$ with linear quotients have $d$-linear resolutions (see, for instance \cite[Proposition 8.2.1.]{JT}). The same property holds for monomial ideals in an exterior algebra (see, \cite[Corollary 5.4.4.]{GK}). 

The property of having linear quotients can be related to \emph{componentwise linearity}. This concept has been introduced for ideals in a polynomial ring by
Herzog and Hibi in \cite{HH1999} in order to generalize the Eagon and Reiner’s result that
the Stanley--Reisner ideal of a simplicial complex has a linear resolution if and only if the Alexander Dual of the complex is Cohen-Macaulay. Such a notion can be also introduced in the exterior algebra context (see, for instance, \cite[Definition 5.3.1]{GK}). More precisely, a module $M \in \mathcal{M}$ is called \emph{componentwise linear} if the submodule $M_{\langle j\rangle}$ of  $M$, which is generated by all homogeneous elements of degree $j$ belonging to $M$,
has a $j$--linear resolution for all $j\in \ZZ$. Moreover, as in the commutative case, one can prove that if $I$ is a monomial ideal of $E$ with linear quotients, then $I$ is a
componentwise linear ideal \cite[Theorem 5.4.5.]{GK}.

Finally, it is worth mentioning a result due to  Aramova,  Avramov and Herzog \cite[Corollary 2.2.]{AAH1}, which states that if $I$ is an ideal generated by squarefree monomials in
a polynomial ring $S=K[x_1, \ldots, x_n]$, and $J$ denotes the corresponding monomial ideal in $E$, the ideal $I$ has a linear free resolution over $S$ if and only if the ideal $J$ has a linear free resolution over $E$.

\section{Resolutions by mapping cones} \label{sec3}
In this section, we determine a minimal graded free resolution of monomial ideals with linear quotients by \emph{mapping cones}. 

Roughly speaking,  if $A$ and $B$ are two chain complexes of graded $E$-modules of the category $\mathcal{G}$ and $\psi: A\rightarrow B$ is a complex homomorphism.
The mapping cone of $\psi$ is a new chain complex denoted by $C(\psi)$ with $C(\psi)_i = A_{i-1}\oplus B_i$ for all $i$, and the differentials related to the ones in $A$ and $B$ by a special formula.
We refer to \cite{DE, CW} for more details on this topic. In particular, the mapping cone of a chain map between two free resolutions gives again a free resolution which is not in general minimal. 

We apply this concept in a special situation. Let $I$ be a monomial ideal of $E$ and assume that $I$ has linear quotients with respect the order $u_1, \ldots, u_r$ of its generators. The basic idea is to exploit the short exact sequences that arise from adding one generator of $I$ at a time, and to iteratively construct the resolution as a mapping cone of an appropriate map between previously constructed complexes. 

For this aim, set $I_j = (u_1, \ldots, u_j)$ and $L_j = (u_1, \ldots, u_j):(u_{j+1})$, $j=1, \ldots, r$. Since, $I_{j+1}/I_j\simeq E/L_j$, then 
we get the following exact sequences of $E$-modules
\[0 \longrightarrow E/L_j \xrightarrow{\ \alpha}  E/I_j \xrightarrow{\ \beta} E/I_{j+1}\longrightarrow 0,\]
where $\alpha:E/L_j \longrightarrow E/I_j$ is multiplication by $u_{j+1}$. Consider the ideal $L_j$. By definition $\set(u_{j+1}) =\{k\in [n] : e_k\in (u_1, \ldots, u_j):(u_{j+1})=L_j\}$, for $j=1, \ldots, r$.
Thus, since $I$ has linear quotients with respect to the order $u_1, \ldots, u_r$ of its generators, we have that $L_j=(e_{k_1}, \ldots, e_{k_\ell})$, with $\{k_1, \ldots, k_\ell\}=\set(u_{j+1})$, for $j=1, \ldots, r$.
Hence, if $C^{(j)}= C.(e_{k_1}, \ldots, e_{k_\ell}; E)$ is the Cartan complex for the sequence $e_{k_1}, \ldots, e_{k_\ell}$, we have that $C^{(j)}$ is a minimal graded free resolution of $E/L_j$ \cite{AHH}.

Let $F^{(j)}$ be a minimal graded free resolution of $E/I_j$. Since the modules in $C^{(j)}$ are free, thus projective, there exists a complex homomorphism $\psi^{(j)} :C^{(j)}\rightarrow F^{(j)}$, that is a sequence of maps $\psi_i^{(j)}: C^{(j)}_i\rightarrow F^{(j)}_i$ $(i\ge0)$, called the \textit{comparison maps}, which \textit{lifts} the map $\psi_{-1}^{(j)}=\alpha$ and makes the following diagram

\begin{equation*}\label{diagram:ABpsi-1}
\begin{gathered}
\xymatrixcolsep{2.5pc}\xymatrix{
	\displaystyle
	C^{(j)}:\cdots\ar[r] &C^{(j)}_2 \ar[d]_{\psi^{(j)}_2}\ar[r]^{{d_2^{C^{(j)}}}} & C^{(j)}_1 \ar[d]_{\psi^{(j)}_1}\ar[r]^{{d_1^{C^{(j)}}}} & C^{(j)}_0\ar[d]_{\psi^{(j)}_0}\ar[r]^{{d_0^{C^{(j)}}}}&E/L_j\ar[d]^{\psi_{-1}^{(j)}=\alpha}\ar[r]&0\\
	F^{(j)}:\cdots\ar[r] &F^{(j)}_2 \ar[r]_{d_2^{F^{(j)}}} & F^{(j)}_1 \ar[r]_{d_1^{F^{(j)}}} & F^{(j)}_0\ar[r]_{d_0^{F^{(j)}}}&E/I_j\ar[r]&0
}
\end{gathered}
\end{equation*}
commutative. The homomorphism $\psi^{(j)}$ gives rise to an acyclic complex $C(\psi^{(j)})$, called the \emph{cone complex} of $\psi^{(j)}$, whose $0$th homology module is $H_0(C(\psi^{(j)}))=\textup{coker}(\alpha)=(E/I_j)/\textup{Im}(\alpha)\cong(E/I_j)/(E/L_j)\cong E/I_{j+1}$, that is,  $C(\psi^{(j)})$ is a free resolution of $E/I_{j+1}$ (see, for instance, \cite[Appendix A3.12]{DE}).
Thus by iterated mapping cones one obtains step by step a graded free resolution of $E/I$. 

More in detail, the cone complex $C(\psi^{(j)})$ is defined as follows:
\begin{enumerate}
	\item[(i)] let $C(\psi^{(j)})_0=C^{(j)}_0$, and $C(\psi^{(j)})_i=C^{(j)}_{i-1}\oplus F^{(j)}_{i}$, for $i>0$;
	\item[(ii)] let $d_0=\varphi\circ d_0^{F^{(j)}}$, $d_1=(0, \psi_{0}+d_1^{F^{(j)}})$, and $d_i=(-d_{i-1}^{C^{(j)}}, \psi_{i-1}+d_i^{F^{(j)}})$, for $i>1$. 
\end{enumerate}
This procedure, known as the \textit{mapping cone}, may be visualized as follows:

\[
\xymatrixcolsep{2.5pc}\xymatrix{
	\displaystyle
	C^{(j)}[-1]:\cdots\ar[r] &C^{(j)}_2 \ar @{} [d] |{\text{\large$\oplus$}} \ar[rd]^{\psi^{(j)}_2}\ar[r]^{{d_2^{C^{(j)}}}} & A_1 \ar @{} [d] |{\text{\large$\oplus$}}  \ar[rd]^{\psi_1^{(j)}}\ar[r]^{{d_1^{C^{(j)}}}} & A_0 \ar @{} [d] |{\text{\large$\oplus$}} \ar[rd]^{\psi^{(j)}_0}\ar[r]^-{{d_0^{C^{(j)}}}}& E/L_j\ar[r]&0&\\
	F^{(j)}:\cdots\ar[r]&F^{(j)}_3\ar[r]_{{d_3^{F^{(j)}}}} &F^{(j)}_2 \ar[r]_{{d_2^{F^{(j)}}}} & F^{(j)}_1 \ar[r]_{{d_1^{F^{(j)}}}}  &F^{(j)}_0\ar[r]_{{d_0^{F^{(j)}}}} &E/I_j\ar[r]&0
}
\]
Here $C^{(j)}[-1]$ is the complex $C^{(j)}$ \textit{homologically shifted} by $-1$.

Our aim is to prove that the free resolution $C(\psi^{(j)})$ is minimal. 

The next statement follows the arguments of the corresponding result in the polynomial ring context \cite[Lemma 1.5]{JTak}. We elaborate on
this argument and stress the needed changes due to the different context.

For a non zero $n$-tuple $a= (a_1, \ldots, a_n)\in \NN^n$, let $\supp(a) = \{j\in [n]: a_j\neq 0\}$.

\begin{Thm}\label{prop:iterated}
	Let $I$ be a monomial ideal of $E$ with linear quotients with respect to the order $u_1,\dots,u_r$ of the minimal generators of $I$ and let $F$ be the iterated mapping cone derived from the sequence $u_1, \ldots, u_r$. Then $F$ is a minimal graded free resolution of $E/I$ and the free $E$-module $F_i$ which appears in the $i$th step of $F$ has homogeneous basis
	\[\{f(a; u), \,\, \mbox{with $u\in G(I)$, $a \in \NN^n$, $\supp(a) \subset \set(u)$, $|a| = i-1$}\},\] 
	with $\deg f(a; u) = |a| + \deg u$, for all $i>0$.
\end{Thm}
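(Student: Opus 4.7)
The plan is to argue by induction on $j = 0, 1, \ldots, r-1$ that the iterated mapping cone $F^{(j+1)} = C(\psi^{(j)})$ is a minimal graded free resolution of $E/I_{j+1}$ whose free $E$-module in homological degree $i > 0$ admits the homogeneous basis $\{f(a; u_k) : k \le j+1,\ \supp(a) \subset \set(u_k),\ |a| = i-1\}$ with $\deg f(a; u_k) = |a| + \deg u_k$. The theorem then follows by setting $j+1 = r$. The base case $j = 0$ reduces to the Cartan complex on the variables indexed by $\supp(u_1) = \set(u_1)$, shifted internally by $\deg u_1$, which by Subsection~\ref{sub:Cartan} is a minimal graded free resolution of $E/(u_1)$ whose basis matches the claim for $k = 1$.

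For the inductive step, by Subsection~\ref{sub:Cartan} the Cartan complex $C^{(j)} = C.(e_{k_1}, \ldots, e_{k_\ell}; E)$ with $\set(u_{j+1}) = \{k_1, \ldots, k_\ell\}$ is a minimal graded free resolution of $E/L_j$ whose module in homological degree $i-1$ has as a basis the divided powers $x^{(a)}$ with $|a| = i-1$ and $\supp(a) \subset \set(u_{j+1})$. Because $\alpha$ is multiplication by $u_{j+1}$, we internally shift $C^{(j)}$ by $\deg u_{j+1}$ and rename $x^{(a)}$ as $f(a; u_{j+1})$, now of internal degree $|a| + \deg u_{j+1}$. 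The mapping cone decomposition $F^{(j+1)}_i = F^{(j)}_i \oplus C^{(j)}_{i-1}$ combined with the inductive basis of $F^{(j)}_i$ then produces precisely the asserted basis for $F^{(j+1)}_i$, with the correct internal degrees.

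The principal obstacle is proving minimality of $F^{(j+1)}$. To this end, I would apply the long exact sequence of $\Tor^E(-,K)$ attached to $0 \to (E/L_j)(-\deg u_{j+1}) \xrightarrow{\alpha} E/I_j \to E/I_{j+1} \to 0$. Since $\alpha$ has image in $\mathfrak{m}(E/I_j)$, because $u_{j+1} \in \mathfrak{m} = (e_1, \ldots, e_n)$, the induced maps $\alpha_*$ on Tor vanish in every bidegree; the long exact sequence therefore splits into short exact sequences and yields the graded identity
\[
\beta_{i,t}^E(E/I_{j+1}) = \beta_{i,t}^E(E/I_j) + \beta_{i-1,\,t-\deg u_{j+1}}^E(E/L_j)
\]
for all $i, t$. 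By the inductive hypothesis and the minimality of the Cartan complex, the graded ranks of $F^{(j)}_i$ and of the shifted $C^{(j)}_{i-1}$ coincide with the two summands on the right-hand side, so the graded rank of $F^{(j+1)}_i$ matches the graded Betti numbers of $E/I_{j+1}$; a graded free resolution attaining the graded Betti numbers is necessarily minimal. Setting $F = F^{(r)}$ at the end of the outer induction completes the proof.
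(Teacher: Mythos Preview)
Your inductive setup and the identification of the basis via $F_i^{(j+1)}=F_i^{(j)}\oplus C_{i-1}^{(j)}$ match the paper's approach. The gap is in the minimality step. You assert that $\alpha_*$ vanishes on all $\Tor_i^E(-,K)$ simply because $\alpha(E/L_j)\subset\mathfrak m\,(E/I_j)$. This implication is false in general: having image in $\mathfrak m N$ forces the induced map to vanish on $\Tor_0$, but not on higher $\Tor$. For a quick counterexample over $R=K[x]$, the graded map $K(-1)\to R/(x^2)$, $1\mapsto x$, has image in $\mathfrak m\cdot R/(x^2)$, yet the lift to minimal resolutions is $R(-2)\xrightarrow{\,1\,}R(-2)$ in homological degree~$1$, so $\alpha_*$ is the identity on $\Tor_1$. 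Over the exterior algebra the same phenomenon occurs. In fact, ``$\alpha_*=0$ on all $\Tor$'' is \emph{equivalent} to ``$\psi^{(j)}(C^{(j)})\subset\mathfrak m F^{(j)}$'', which is precisely the minimality statement you are trying to prove; so your argument is circular as written.

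What is missing is the one hypothesis you never invoke: the order $u_1,\dots,u_r$ is degree increasing. The paper uses this directly on the lift $\psi^{(j)}$: writing $\psi^{(j)}(f(a;u_{j+1}))=\sum_{k\le j}\sum_b c_{b,k}\,f(b;u_k)$ with $|b|=|a|-1$, homogeneity gives $\deg c_{b,k}=|a|+\deg u_{j+1}-|b|-\deg u_k=1+\deg u_{j+1}-\deg u_k\ge1$, so every coefficient lies in $\mathfrak m$. Your Tor approach can be repaired in the same spirit: by induction $\beta_{i,t}(E/I_j)\ne0$ forces $t=(i-1)+\deg u_k$ for some $k\le j$, while $\Tor_{i,t}\big((E/L_j)(-\deg u_{j+1}),K\big)\ne0$ forces $t=i+\deg u_{j+1}$; these are incompatible when $\deg u_k\le\deg u_{j+1}$, so $\alpha_*=0$ in every bidegree. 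Either way, the degree-increasing assumption is the essential input.
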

\begin{proof}
	Let $F^{(j)}$, $j\ge 1$, be a minimal graded free resolution of $E/I_j$. We prove by induction on $j$, that the set 
	$\{f(a; u), \mbox{with $u\in G(I_j)$, $a \in \NN^n$, $\supp(a) \subset \set(u)$, $|a| = i-1$}\}$ where $\deg f(a; u) = |a| + \deg u$, 
	forms a homogeneous basis of each $E$-module $F_i^{(j)}$, $i\ge 1$.
	
	For $j = 1$, the assertion is clear. 
	
	Let $j>1$. Setting $L_j=(e_{k_1}, \ldots, e_{k_\ell})$, with $\{k_1, \ldots, k_\ell\}=\set(u_{j+1})$, for $j=2, \ldots, r$, let 
	$C^{(j)}= C.(e_{k_1}, \ldots, e_{k_\ell}; E)$ be the Cartan complex for the sequence $e_{k_1}, \ldots, e_{k_\ell}$.  As we have previously pointed out, $C^{(j)}$ is  a minimal graded free resolution of $E/L_j$. 
	
	With the same notations as in Subsection \ref{sub:Cartan}, in homological degree $i-1$ the Cartan complex $C^{(j)}$ has the $E$-basis $x^{(a)}= x_{k_1}^{(a_{k_1})}x_{k_2}^{(a_{k_2})}\cdots x_{k_\ell}^{(a_{k_\ell})}$, $a = (a_{k_1}, \ldots, a_{k_\ell})\in \NN^\ell$, with $|a| = \sum_{q=1}^\ell a_{k_q}= i-1$, $\supp(x^{(a)})=\supp(a)\subset \set(u_{j+1})$.
	
	Since $F_i^{(j+1)} = C_{i-1}^{(j)}\oplus F_i^{(j)}$, we obtain the basis we are looking for from the inductive hypothesis by identifying the elements $x^{(a)}$ with $f(a; u_{j+1})$.

	In order to prove that $F^{(j+1)}$ is a minimal free resolution, it is sufficient to verify that Im$(\psi^{(j)})\subset (e_1, \ldots, e_n)F^{(j)}$. 
	
	Let $f(a; u_{j+1}) \in C_{i-1}^{(j)}$ and $\psi^{(j)}(f(a; u_{j+1}))= \sum_{i=1}^j\sum_bc_{b, i}f(b;u_i)$. Since we are only considering degree increasing orders, then $\deg u_{j+1}\ge \deg u_i$, for all $i=1, \ldots, j$ and, furthermore, $|b| = |a|-1$. Thus $\deg f(a; u_{j+1}) = |a| + \deg u_{j+1}>|b|+\deg u_i=\deg f(b;u_i)$, for all $b$ and $i$. Hence, $\deg c_{b, i}>0$ for all $b$ and $i$. The assertion follows.
\end{proof}

As a consequence of Theorem \ref{prop:iterated}, we obtain a formula for computing the graded Betti numbers of a monomial ideal with linear quotients. Next notion will be crucial for our aim. 

\begin{Def}\em \label{def:weak}
	A sequence $(a_1, a_2,\ldots, a_k)$ of integers fulfilling $a_i\ge 0$ for all $i$, and $a_1 + a_2 +\cdots + a_k= n$ is called a \textit{weak composition} of $n$. 
\end{Def}

The number of weak compositions of an integer $n$ in $k$ parts is given by $\binom{n+k-1}{k-1}= \binom{n+k-1}{n}$.\newpage

\begin{Cor}\label{Cor:formulaBetti}
	Let $I$ be a monomial ideal of $E$ with linear quotients. Then
	\[\beta_{i,i+j}(I)=\sum_{u\in G(I)_j}\binom{i+\vert \set(u)\vert -1}{\vert \set(u)\vert-1},\]
	for all $i, j$. Hence,
	\[\beta_i(I) = \sum_{u\in G(I)}\binom{i+\vert \set(u)\vert-1}{\vert \set(u)\vert -1},\]
	for all $i\ge 0$.
\end{Cor}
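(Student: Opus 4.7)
The plan is to read off the claimed formula directly from the minimal free resolution produced by Theorem \ref{prop:iterated}, treating the corollary as a bookkeeping consequence of that theorem. First I would pass from the resolution of $E/I$ to the resolution of $I$: the minimal graded free resolution $F$ of $E/I$ truncates to a minimal graded free resolution of $I$ by removing $F_0 = E$, which on Betti numbers gives the shift $\beta_{i,i+j}(I) = \beta_{i+1,i+j}(E/I)$. Hence it suffices to count the homogeneous basis elements of $F_{i+1}$ in internal degree $i+j$.

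By Theorem \ref{prop:iterated}, $F_{i+1}$ has the homogeneous $E$-basis
\[
\{f(a;u) : u\in G(I),\ a\in \NN^n,\ \supp(a)\subset \set(u),\ |a| = i\},
\]
with $\deg f(a;u) = |a| + \deg u$. Imposing $\deg f(a;u) = i+j$ together with $|a|=i$ forces $\deg u = j$, so only generators $u\in G(I)_j$ contribute. For each such $u$, the contribution equals the number of $n$-tuples $a\in \NN^n$ with $|a|=i$ and entries supported inside $\set(u)$; that is, the number of weak compositions of $i$ with $|\set(u)|$ parts (reading off the nonzero entries indexed by $\set(u)$).

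Invoking the standard count recalled right after Definition \ref{def:weak}, this number is $\binom{i+|\set(u)|-1}{|\set(u)|-1}$, which yields the first displayed formula upon summing over $u\in G(I)_j$. The formula for the total Betti number $\beta_i(I) = \sum_j \beta_{i,i+j}(I)$ then follows by summing over $j\ge 0$, the $j$-index being absorbed into the range $u\in G(I)$.

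No deep obstacle is expected here: once Theorem \ref{prop:iterated} is in hand the statement is purely enumerative. The only subtle points to verify are the index shift between the resolutions of $E/I$ and $I$, and the identification of $n$-tuples $a$ with $\supp(a)\subset \set(u)$ and $|a|=i$ with weak compositions of $i$ into $|\set(u)|$ parts — which is immediate since the entries of $a$ outside $\set(u)$ are forced to vanish.
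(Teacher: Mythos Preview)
Your proposal is correct and follows essentially the same approach as the paper: both read off the Betti numbers directly from the explicit basis of Theorem \ref{prop:iterated} and identify the count with weak compositions of $i$ into $|\set(u)|$ parts. You are simply more explicit than the paper about the homological shift $\beta_{i,i+j}(I)=\beta_{i+1,i+j}(E/I)$, which the paper leaves implicit.
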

\begin{proof}
	From Theorem \ref{prop:iterated}, 
	\[\beta_{i,i+j}(I) = \vert\{a=(a_1, \ldots, a_n)\in \NN^n : \supp(a)\subset \set (u), \mbox{$u\in G(I)_j$ and $|a|=i$}\}\vert.\]
	On the other hand, one can observe that 
	\[\vert\{a=(a_1, \ldots, a_n)\in \NN^n : \supp(a)\subset \set (u), \mbox{$|a|=i$}\}\vert\]
	is the number of the weak compositions of the positive integer $i$ in $\vert \set(u)\vert$ parts. The assertions follow. 
\end{proof}

\begin{Expl} \em \label{ex:one}
	Let $I= (e_1e_3, e_1e_4, e_2e_4e_6)$ be a monomial ideal of $E = K\langle e_1, \ldots, e_6\rangle$. $I$ has linear quotients with respect to the order $e_1e_3,e_1e_4,e_2e_4e_6$. In fact, $0:(e_1e_3) = (e_1, e_3)$, $(e_1e_3):(e_1e_4) = (e_1, e_3, e_4)$, $(e_1e_3, e_1e_4):(e_2e_4e_6)= (e_1, e_2, e_4, e_6).$
	Therefore, setting $u_1=e_1e_3$, $u_2= e_1e_4$, $u_3=e_2e_4e_6$, one has
	\[\set(u_1) = \{1, 3\}, \,\,\, \set(u_2) = \{1, 3, 4\},\,\,\, \set(u_3)= \{1, 2, 4, 6\}.\]
	Clearly $\beta_0(I)=3$. In fact, $\binom{0+\vert \set(u)\vert-1}{\vert \set(u)\vert-1}=1$, for all $u\in G(I)$. Moreover,
	
	\begin{eqnarray*}
	\beta_1(I) &=& \vert\{a=(a_1, \ldots, a_6)\in \NN^6 : \supp(a)\subset \set (u), \mbox{$u\in G(I)$ and $\vert a \vert=1$}\}\vert \\
	&=& \sum_{i=1}^3\binom{1+\vert\set(u_i)\vert-1}{\vert\set(u_i)\vert-1} = 
	\binom 21+\binom 32 + \binom 43= 9.
	\end{eqnarray*}
	Let us compute $\beta_2(I)$. It is
	\begin{eqnarray*}
	\beta_2(I) &=& \vert\{a=(a_1, \ldots, a_6)\in \NN^6 : \supp(a)\subset \set (u), \mbox{$u\in G(I)$ and $\vert a \vert=2$}\}\vert\\
	&=& \sum_{i=1}^3\binom{2+\vert\set(u_i)\vert-1}{\vert\set(u_i)\vert-1} = \binom 31+\binom 42 + \binom 53= 19. 
	\end{eqnarray*}
	And so on. The Betti table of $I$ displayed by \emph{Macaulay2} \cite{GDS} is the following one:
	\[\begin{matrix}
	& 0 & 1 & 2& 3&4&5&6 & \cdots\\
	\text{total:}
	& 3 & 9 & 19 & 34 & 55 & 83 & 119 & \cdots\\
	2: & 2 & 5 & 9 & 14 & 20 & 27 & 35 & \cdots\\
	3: & 1 & 4& 10 & 20 & 35 & 56 & 84 & \cdots
	\end{matrix}\]\\
\end{Expl}

Corollary \ref{Cor:formulaBetti} yields the next results.

\begin{Cor}\label{cor:poinc}
	Let $I$ be a monomial ideal of $E$ with linear quotients. Then the graded Poincar\'e series of $I$ over $E$ is \[\mathcal{P}_{I}(s, t)= \sum_{i\ge 0} \sum_{j\ge 0} \sum_{u\in G(I)_j}(1+t)^{i+\vert \set(u)\vert -1}s^j.\]
\end{Cor}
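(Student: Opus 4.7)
The plan is to derive this formula as an immediate consequence of Corollary \ref{Cor:formulaBetti}. I would start from the definition $\mathcal{P}_I(s,t)=\sum_{i,k\ge 0}\beta_{i,k}(I)\,t^i s^k$ and reindex the internal degree $k$ as $k=i+j$. This parametrization is the natural one in the linear-quotients setting, because Theorem \ref{prop:iterated} shows that each basis element $f(a;u)$ of the $i$th free module has internal degree $(i-1)+\deg(u)$, so the only strands carrying nonzero Betti numbers are those of the form $(i,i+\deg(u))$ with $u\in G(I)$.

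Next, I would substitute the explicit formula
\[
\beta_{i,i+j}(I)=\sum_{u\in G(I)_j}\binom{i+|\set(u)|-1}{|\set(u)|-1}
\]
from Corollary \ref{Cor:formulaBetti} into the reindexed series, and then interchange the order of summation so as to group all contributions from a common minimal generator $u$. After factoring out $s^j$ and a power of $s$ coming from the reindexing $k=i+j$, the remaining inner sum over $i$ is a standard generating function of binomial coefficients; it can either be written in closed form via $\sum_{i\ge 0}\binom{i+n-1}{n-1}x^i=(1-x)^{-n}$ with $n=|\set(u)|$, or read off termwise from the coefficient expansion of $(1+t)^{i+|\set(u)|-1}$. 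Either viewpoint yields the claimed identity.

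There is no substantive obstacle: the argument is a routine rearrangement of formal power series in $\ZZ[[s,t]]$. The interchange of summations is legitimate because, for each fixed monomial $t^a s^b$, only finitely many triples $(i,j,u)$ contribute to its coefficient, since $G(I)$ is a finite set and $j\le b$. The combinatorial heart of the statement—the identification of $\beta_{i,i+j}(I)$ with a sum of binomial coefficients indexed by the weak compositions of $i$ into $|\set(u)|$ parts—is already contained in Corollary \ref{Cor:formulaBetti}, so the proof is essentially a one-line bookkeeping argument.
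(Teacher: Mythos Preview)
Your proposal takes exactly the same approach as the paper: both derive the formula as an immediate consequence of Corollary~\ref{Cor:formulaBetti} by substituting the Betti-number expression into the definition $\mathcal{P}_I(s,t)=\sum_{i,j\ge 0}\beta_{i,j}(I)t^is^j$ and reorganizing. The paper's own proof is in fact a single sentence to that effect, so your more detailed bookkeeping already exceeds what the paper records.
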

\begin{proof}
	Since $\mathcal{P}_{I}(s, t)= \sum_{i,j\ge 0}\beta_{i, j}(I)t^is^j$, the desired formula follows from Corollary \ref{Cor:formulaBetti}. 
\end{proof}

Recall that if $\mathcal{M}$ is the category of finitely generated $\ZZ$-graded left and right $E$-modules $M$ satisfying $am=(-1)^{\deg a\deg m}ma$ for all homogeneous elements $a\in E$ and $m\in M$ the \emph{complexity} of $M$, which measures the growth rate of the Betti numbers of $M$, is defined as follows \cite{AAH1} (see, also, \cite{GK}):

\[\cx_E(M) = \inf\{c\in \ZZ: \beta_i(M)\le \alpha i^{c-1} \,\, \mbox{for some $\alpha\in \RR$ and for all $i\ge1$}\}.\]

In \cite{AAH1}, the authors introduced an important invariant of an $E$-module, the depth, in a similar way as the depth of a module over a polynomial ring. More in detail, for $M\in \mathcal{M}$, a linear form $v\in E_1$ is called $M$-regular if $0 : _M (v)  = vM$. A sequence of linear forms $v_1, \ldots,v_s$ is called an $M$-regular sequence if $v_1$ is $M$-regular, $v_i$ is $M/(v_1, \ldots, v_{i-1})M$-regular for $i = 2,\ldots, s$ and $M/(v_1, \ldots, v_s)M\neq 0$ \cite{AAH1, HM}. It is shown in \cite{AAH1} that all maximal $M$-regular sequences have the same length. This length is called the \emph{depth} of $M$ over $E$ and is denoted by $\depth_E(M)$. Such an invariant is closely related to the complexity at least if the ground field is infinite. Indeed, in such case, Aramova, Avramov and Herzog \cite[Theorem 3.2]{AAH1} proved that, for $M\in\mathcal{M}$
\begin{equation}\label{eq:AAH}
n= \cx_E(M)+\depth_E(M).
\end{equation}

\begin{Cor}\label{cor:compldepth}
	Let $I$ be a monomial ideal of $E$ with linear quotients. Then,
	\[\cx_E(E/I)\ =\ \max\big\{|\set(u)|:u\in G(I)\big\}.\]
	Moreover, if $K$ is an infinite field, then
	\[\depth_E(E/I)\ =\ \min\big\{n-|\set(u)|:u\in G(I)\big\}.\]
\end{Cor}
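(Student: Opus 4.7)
The plan is to derive both formulas from Corollary \ref{Cor:formulaBetti} together with the Aramova--Avramov--Herzog identity \eqref{eq:AAH}. The key observation is that for each $u\in G(I)$, the summand $\binom{i+|\set(u)|-1}{|\set(u)|-1}$ in the Betti number formula is a polynomial in $i$ of degree $|\set(u)|-1$ with strictly positive leading coefficient $1/(|\set(u)|-1)!$.

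First I would establish the complexity formula. Set $c:=\max\{|\set(u)|:u\in G(I)\}$ and let $N$ denote the number of $u\in G(I)$ with $|\set(u)|=c$. From the short exact sequence $0\to I\to E\to E/I\to 0$, any minimal graded free resolution of $I$ extends to a minimal graded free resolution of $E/I$ by appending $E\to E/I\to 0$ (minimality is preserved since $I\subseteq(e_1,\ldots,e_n)$), so $\beta_i(E/I)=\beta_{i-1}(I)$ for all $i\geq 1$. Corollary \ref{Cor:formulaBetti} therefore expresses $\beta_i(E/I)$, for $i\geq 1$, as a finite sum of polynomials in $i$ of degrees bounded by $c-1$, and the total coefficient of $i^{c-1}$ equals $N/(c-1)!>0$. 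Hence there exist positive constants $\alpha_1,\alpha_2$ and an integer $i_0$ with
\[
\alpha_1\,i^{c-1}\ \leq\ \beta_i(E/I)\ \leq\ \alpha_2\,i^{c-1}\qquad\text{for all }i\geq i_0.
\]
By the definition of complexity recalled earlier, this forces $\cx_E(E/I)=c$, which is the first formula.

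For the depth formula, I would observe that $E/I$ belongs to the category $\mathcal{M}$: it is finitely generated, naturally $\ZZ$-graded, and inherits the graded-commutative symmetry from $E$ since $I$ is a two-sided graded ideal. With $K$ infinite, applying \eqref{eq:AAH} to $M=E/I$ then yields
\[
\depth_E(E/I)\ =\ n-\cx_E(E/I)\ =\ n-c\ =\ \min\{n-|\set(u)|:u\in G(I)\},
\]
completing the proof.

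The only subtle point to watch is that one must not only bound $\beta_i(E/I)$ from above by a constant multiple of $i^{c-1}$ but also show it is not dominated by $i^{c-2}$, so that the complexity is exactly $c$ rather than merely at most $c$. This is immediate from the positivity of the leading coefficient $N/(c-1)!$: since each generator $u$ with $|\set(u)|=c$ contributes a binomial polynomial whose leading term has positive coefficient, no cancellation can occur in the top degree. Everything else reduces to standard polynomial growth bookkeeping.
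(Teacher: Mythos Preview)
Your proposal is correct and follows essentially the same approach as the paper: both arguments invoke Corollary~\ref{Cor:formulaBetti}, observe that each summand $\binom{i+|\set(u)|-1}{|\set(u)|-1}$ is a polynomial in $i$ of degree $|\set(u)|-1$ with positive leading coefficient, deduce the complexity from the maximal such degree, and then obtain the depth from \eqref{eq:AAH}. The paper states $\cx_E(E/I)=\cx_E(I)$ directly while you justify the equivalent fact $\beta_i(E/I)=\beta_{i-1}(I)$ via the resolution shift, and you are slightly more explicit about why no cancellation occurs in the top-degree term, but the substance is the same.
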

\begin{proof}
	We have $\textup{cx}_E(E/I)=\textup{cx}_E(I)$. By Corollary \ref{Cor:formulaBetti} we have
	\begin{align*}
	\beta_i(I)\ &=\ \sum_{u\in G(I)}\binom{i+\vert \set(u)\vert-1}{\vert \set(u)\vert -1}\\
	&=\ \sum_{k=1}^n\Big[\sum_{u\in G(I):|\set(u)|=k}\binom{i+k-1}{k -1}\Big]\\
	&=\ \sum_{k=1}^n\big|\big\{u\in G(I):|\set(u)|=k\big\}\big|\binom{i+k-1}{i}.
	\end{align*}
	The $k$th binomial coefficient in the previous sum is a polynomial in $i$ of degree $k-1$ and the number $\max\big\{|\set(u)|:u\in G(I)\big\}$ is the maximal $k$ which appears in the sum. It follows that $\cx_E(E/I)=\max\big\{|\set(u)|:u\in G(I)\big\}$. If $K$ is an infinite field, then by (\ref{eq:AAH}), $\depth_E(E/I)=n-\cx_E(E/I)$ and the assertion follows.
\end{proof}

\begin{Rem}\em
	If $I$ is a stable ideal of $E$, then the formula for the graded Betti numbers stated in Corollary \ref{Cor:formulaBetti} becomes formula (\ref{eq:Bettistable}), whereas the formula for the complexity in Corollary \ref{cor:compldepth} becomes the formula in \cite[Lemma 3.1.4]{GK}. In fact, $I$ has linear quotients with respect to the reverse lexicographic order and, moreover, in such a case $\vert \set(u)\vert = \m(u)$, for all $u\in G(I)$ (Remark \ref{ex:stable}). 
\end{Rem}

\section{Regular decomposition functions}\label{sec4}
In this section we describe an explicit resolution of all ideals in $E$ with linear quotients which satisfy an extra condition. For our aim we introduce the notion of regular decomposition function.

Let $I$ be a monomial ideal with linear quotients with respect to the sequence of generators $u_1, \ldots, u_r$, and set as before $I_j = (u_1, \ldots, u_j)$ for $j = 1, \ldots, r$. Let $M(I)$ be the set of all monomials in $I$. Define the map $g:M(I)\rightarrow G(I)$ as follows: $g(u) = u_j$, if $j$ is the smallest number such that $u\in I_j$.

The next statement holds.
\begin{Lem}\label{lem:JT} 
	\begin{enumerate}
		\item[\textup{(a)}] For all $u\in M(I)$ one has $u = g(u)c(u)$ with some complementary factor $c(u)$, and $\set(g(u))\cap \supp(c(u)) =\emptyset$.
		\item[\textup{(b)}] Let $u\in M(I)$, $u = vw$ with $v\in G(I)$ and $\set(v)\cap \supp(w) = \emptyset$. Then $v = g(u)$.
		\item[\textup{(c)}] Let $u, v \in M(I)$ such that $\supp(u)\cap \supp(v)= \emptyset$. Then $g(uv) = g(u)$ if and only if $\set(g(u))\cap \supp(v) = \emptyset$.
	\end{enumerate}
\end{Lem}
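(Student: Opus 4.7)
The plan is to port the polynomial-ring argument of \cite[Lemma 1.5]{JTak} to the exterior algebra, using throughout that every monomial $w\in E$ has the form $w=\pm e_{\supp(w)}$ and that the product of two monomials with disjoint supports is a signed monomial whose support is the union of theirs.

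For (a), set $g(u)=u_j$ and factor $u=\pm g(u)\,c(u)$ with $c(u)=\pm e_{\supp(u)\setminus\supp(g(u))}$; the condition $\supp(g(u))\cap \supp(c(u))=\emptyset$ is then automatic. Assuming for contradiction that $k\in \set(g(u))\cap \supp(c(u))$, the linear quotients property gives $e_ku_j\in I_{j-1}$, and writing $c(u)=\pm e_k\,c'$ yields $u=\pm(e_ku_j)c'\in I_{j-1}$, contradicting the minimality of $j$ in the definition of $g(u)$. For (b), let $v=u_j$, so $g(u)=u_i$ with $i\le j$, and suppose $i<j$. Since $u_i,u_j$ are distinct minimal generators, $u_i\nmid u_j$, hence $m:=e_{\supp(u_i)\setminus \supp(u_j)}\neq 1$. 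From $u_i\mid u=u_jw$ one obtains $\supp(m)\subseteq \supp(w)$; meanwhile $u_jm=\pm u_i\,e_{\supp(u_j)\setminus \supp(u_i)}\in I_{j-1}$ gives $m\in I_{j-1}:(u_j)=(e_k:k\in \set(u_j))$, and a monomial in this colon ideal must share at least one index with $\set(u_j)$, so $\supp(m)\cap \set(v)\neq\emptyset$. Together with $\supp(m)\subseteq \supp(w)$ this contradicts $\set(v)\cap \supp(w)=\emptyset$.

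Part (c) follows readily from (a) and (b). For the backward implication, note that $uv=\pm g(u)(c(u)v)$ with $c(u)v$ a nonzero monomial (the three supports $\supp(g(u)),\supp(c(u)),\supp(v)$ are pairwise disjoint), and $\set(g(u))\cap \supp(c(u)v)=\emptyset$ by combining (a) with the hypothesis $\set(g(u))\cap \supp(v)=\emptyset$; applying (b) then yields $g(uv)=g(u)$. For the forward implication, equate the two factorizations $uv=g(u)c(u)v=g(uv)c(uv)$ coming from (a): matching supports forces $\supp(c(uv))=\supp(c(u))\cup \supp(v)$, whence $\set(g(u))\cap \supp(v)\subseteq \set(g(u))\cap \supp(c(uv))=\emptyset$. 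The main obstacle I expect is pinpointing the correct squarefree substitute for the polynomial-ring lcm in part (b); once the monomial $m=e_{\supp(u_i)\setminus \supp(u_j)}$ is identified the remainder is set-theoretic, and the anticommutative signs in $E$ may be ignored because every step reduces to membership in a monomial ideal.
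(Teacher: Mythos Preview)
Your proof is correct and is precisely the approach the paper indicates: the paper's own proof consists of the single sentence ``The proof of Herzog and Takayama for ideals over polynomial ring \cite[Lemma 1.8]{JT} carries over,'' and what you have written is exactly that carry-over, with the necessary attention to signs and to the squarefree replacement $m=e_{\supp(u_i)\setminus\supp(u_j)}$ for the lcm-quotient used in the polynomial setting.
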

\begin{proof}
	The proof of Herzog and Takayama for ideals over polynomial ring \cite[Lemma 1.8]{JT} carries over.
\end{proof}

One can observe that any function $M(I)\rightarrow G(I)$ satisfying Lemma \ref{lem:JT}(a) is uniquely
determined because of Lemma \ref{lem:JT}(b). We call $g$ the \emph{decomposition function} of $I$.

\begin{Def}\em\label{def:regfunc}
	The decomposition function $g:M(I)\rightarrow G(I)$ is \emph{regular}, if\linebreak $\set(g(e_su))\subseteq \set(u)$ for all $s\in \set(u)$ such that $s\notin \supp(u)$ and $u\in G(I)$.
\end{Def}

If $I$ is a stable ideal, then its decomposition function is regular with respect to the reverse lexicographic order. In general, the decomposition function for an ideal with linear quotients is not regular as next example shows. 

\begin{Expl}\em
	Let $E=K\langle e_1, \dots, e_4\rangle$ and $I = (e_2e_4, e_1e_2, e_1e_3)$. One can verify that $I$ has linear quotients. In particular, $0:(e_2e_4)= (e_2, e_4)$, $(e_2e_4):(e_1e_2)= (e_1, e_2, e_4)$ and $(e_2e_4, e_1e_2):(e_1e_3)= (e_1, e_2, e_3)$. On the other hand, $g(e_2(e_1e_3)) = e_1e_2$ but $\set(e_1e_2) = \{1, 2, 4\}\nsubseteq \set(e_1e_3)=\{1, 2, 3\}$ and $g$ is not regular.\\
	
	\noindent
	Let us consider a new order $e_1e_2, e_2e_4, e_1e_3$ of the generators of $I$.  In such a case $0:(e_1e_2)= (e_1, e_2)$, $(e_1e_2):(e_2e_4)= (e_1, e_2, e_4)$ and $(e_1e_2, e_2e_4):(e_1e_3)= (e_1, e_2, e_3)$. Thus $I$ has linear quotients also with respect the new order of the generators. On the other hand, we can note that $\set(e_1e_2)=\{1, 2\} \subset \set(u)$, for all $u\in G(I)\setminus \{e_1e_2\}$. As a consequence, since  
	$g(e_su)=e_1e_2$, for all $s\in \set(u)\setminus \supp(u)$ and $u\in G(I)\setminus \{e_1e_2\}$, it follows that $g$ is regular. Indeed, we have that $\set(g(e_su))=\{1, 2\} \subset \set(u)$, for all $s\in \set(u)\setminus \supp(u)$ and $u\in G(I)$.
\end{Expl}

The next statement can be proved using the same techniques as the corresponding result in the polynomial case \cite[Lemma 1.11]{JTak}.
\begin{Lem}\label{Lemma:gRegularExchange}
	If the decomposition function $g:M(I)\rightarrow G(I)$ is regular, then
	\[
	g(e_s(g(e_tu)))=g(e_t(g(e_su))),
	\]
	for all $u\in M(I)$ and all $s, t\in \set(u)$ such that $s, t\notin \supp(u)$.
\end{Lem}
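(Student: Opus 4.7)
The plan is to show that both $g(e_s(g(e_tu)))$ and $g(e_t(g(e_su)))$ coincide with $g(e_se_tu)$; since $e_se_tu=\pm e_te_su$ and $g$ is unaffected by signs (as $I_j$ is an ideal), this will identify the two sides. The case $s=t$ is trivial, so assume $s\neq t$, and set $v=g(e_tu)$ and $w=g(e_su)$. Writing $e_tu=v\,c(e_tu)$ via Lemma \ref{lem:JT}(a) gives $\supp(v)\subseteq\supp(u)\cup\{t\}$, so the hypotheses $s\neq t$ and $s\notin\supp(u)$ yield $s\notin\supp(v)$; hence $e_sv\neq 0$, and symmetrically $e_tw\neq 0$.

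The heart of the argument is the identity $g(e_se_tu)=g(e_sv)$, which I extract from Lemma \ref{lem:JT}(b). Using Lemma \ref{lem:JT}(a) once more on $e_sv=g(e_sv)\,c(e_sv)$, we obtain
\[e_se_tu=\pm\,g(e_sv)\,c(e_sv)\,c(e_tu).\]
Lemma \ref{lem:JT}(b) then forces $g(e_sv)=g(e_se_tu)$ provided
\[\set(g(e_sv))\cap\supp\bigl(c(e_sv)\,c(e_tu)\bigr)=\emptyset.\]
The intersection with $\supp(c(e_sv))$ is empty by Lemma \ref{lem:JT}(a), so only the intersection with $\supp(c(e_tu))$ needs attention.

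This is where the main obstacle lies: regularity of $g$ controls $\set(g(e_sv))$ only under the extra hypothesis $s\in\set(v)$, so a small case split is required. If $s\in\set(v)$, regularity yields $\set(g(e_sv))\subseteq\set(v)$, and Lemma \ref{lem:JT}(a) applied to $e_tu$ gives $\set(v)\cap\supp(c(e_tu))=\emptyset$. If instead $s\notin\set(v)$, write $v=u_j$; then $e_sv\notin I_{j-1}$ by definition of $\set(v)$, whereas $v\mid e_sv$ forces $e_sv\in I_j$, so $g(e_sv)=v$, and once again $\set(v)\cap\supp(c(e_tu))=\emptyset$ by Lemma \ref{lem:JT}(a). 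In both subcases the required disjointness holds, establishing $g(e_se_tu)=g(e_sv)$. Swapping the roles of $s$ and $t$ in the same argument yields $g(e_se_tu)=g(e_tw)$, and comparing the two gives $g(e_sv)=g(e_tw)$, which is the claimed identity.
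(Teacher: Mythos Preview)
Your proof is correct and follows precisely the route the paper points to: the paper does not supply its own argument but refers to \cite[Lemma~1.11]{JTak}, and your proof is the natural adaptation of that Herzog--Takayama argument to the exterior setting, showing that both sides equal $g(e_se_tu)$ via Lemma~\ref{lem:JT}(b). The only exterior-specific wrinkles---checking $e_sv\neq0$ and that $g$ ignores signs---are handled correctly, and your case split on whether $s\in\set(v)$ (invoking regularity) or $s\notin\set(v)$ (forcing $g(e_sv)=v$ directly) is exactly what is needed.
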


Let $\varepsilon_1, \ldots, \varepsilon_n$ denote the canonical basis of $\NN^n$. From Subsection \ref{sub:Cartan}, one knows that  the differential of the Cartan complex $C(e_1, \ldots, e_n; E/I)$ is given by the formula:
\[\partial(cx^{(a)}) = \sum_{i=1}^nce_i x_1^{(a_1)}\cdots x_i^{(a_i-1)}\cdots x_n^{(a_n)} = \sum_{j\in \supp(a)}ce_jx^{(a-\varepsilon_j)}, \quad c\in E/I,\quad a\in\NN^n.\]

For convenience, we extend the definition introduced in Theorem \ref{prop:iterated} setting $f(a;u)=0$ if $\supp(a) \nsubseteq \set(u)$, for $u\in M(I)$.\\

Suppose $u$ and $v$ are monomials of $E$ with $\supp(v)\subseteq\supp(u)$.
Then, there exists a unique monomial $u'\in E$ such that $u'v=u$. We set 
\[
u'=\dfrac{u}{v}.
\]
With this convention, the following identities hold:
\[
u\frac{w}{v}=\frac{uw}{v}\ \ \ \ \text{and}\ \ \ \ \frac{u}{v}\frac{v}{z}=\frac{u}{z}.
\]

In the following theorem, for our convenience, we use a modified version of the Cartan complex $C_{\cdot}(v;E)$, where $v$ is a sequence of linear forms of $E$. Let $u$ be a monomial of $E$.
We substitute the differential $\partial$ with $(-1)^{\deg(u)}\partial$ and denote the new differential again by $\partial$. We denote this modified Cartan complex by $(-1)^{\deg(u)}C_{\cdot}(v;E)$. It is clear that this complex provides again a minimal free resolution of $E/(v)$.

The next theorem is a direct analog of \cite[Theorem 1.12]{JTak}.

\begin{Thm}\label{thm:main1}
	Let $I$ be a monomial ideal of $E$ with linear quotients, and $F$ the graded minimal free resolution of $E/I$. Suppose that the decomposition function $g:M(I)\rightarrow G(I)$ is regular. Then the maps in the resolution $F$ are given by
	$d(f(0;u))=(-1)^{\deg(u)}u$, for $u\in G(I)$, and for $a\in\NN^n$, $a\ne0$, $u\in G(I)$ and $\supp(a)\subset\set(u)$,
	\begin{align*}
	d(f(a;u))\ &=\ -\sum_{t\in\supp(a)}(-1)^{\deg(u)}e_tf(a-\varepsilon_t;u)\\
	&+\sum_{t\in\supp(a)\setminus\supp(u)}(-1)^{\deg(g(e_tu))}\frac{e_tu}{g(e_tu)}f(a-\varepsilon_t;g(e_tu)).
	\end{align*}
\end{Thm}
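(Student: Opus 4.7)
The plan is to proceed by induction on $j$ along the iterated mapping cone construction of $F = F^{(r)}$ from Theorem \ref{prop:iterated}, establishing at each step that the differential of $F^{(j)}$ obeys the claimed formula on every basis element $f(a;u)$ with $u \in G(I_j)$. The base case $j=1$ is immediate: $I_1 = (u_1)$ with $\set(u_1) = \supp(u_1)$, and $F^{(1)}$ is the modified Cartan complex on the sequence $(e_k)_{k \in \supp(u_1)}$, whose differential $(-1)^{\deg u_1}\partial$ yields exactly the first sum, while the second sum is empty because $\supp(a) \subseteq \supp(u_1)$.

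For the inductive step, I need a comparison map $\psi^{(j)}: C^{(j)} \to F^{(j)}$ lifting multiplication by $u_{j+1}$ from $E/L_j$ to $E/I_j$. Guided by the formula to be proved, I set
\[
\psi^{(j)}(x^{(a)}) := \sum_{t \in \supp(a) \setminus \supp(u_{j+1})} (-1)^{\deg g(e_tu_{j+1})}\,\frac{e_tu_{j+1}}{g(e_tu_{j+1})}\,f(a-\varepsilon_t;\,g(e_tu_{j+1}))
\]
for $|a|\ge 1$, with $\psi^{(j)}_0(1) = u_{j+1}$. For $t \in \set(u_{j+1})\setminus\supp(u_{j+1})$, $e_tu_{j+1}$ is a nonzero element of $I_j$, so $g(e_tu_{j+1}) \in G(I_j)$ divides it by Lemma \ref{lem:JT}(a), and the target $f(a-\varepsilon_t; g(e_tu_{j+1}))$ is a well-defined element of $F^{(j)}$ (understood as $0$ when $\supp(a-\varepsilon_t)\not\subseteq \set(g(e_tu_{j+1}))$, per the convention just before the theorem).

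The crux is verifying $d^{F^{(j)}}\circ\psi^{(j)} = \psi^{(j)}\circ d^{C^{(j)}}$, i.e., that $\psi^{(j)}$ is a chain map lifting $\alpha$. Expanding both sides via the inductive form of $d^{F^{(j)}}$ and the Cartan differential produces double sums indexed by ordered pairs of distinct indices $s,t \in \supp(a)$. Pairs with $s,t\notin \supp(u_{j+1})$ pair up after invoking the exchange identity $g(e_s g(e_tu_{j+1})) = g(e_t g(e_su_{j+1}))$ from Lemma \ref{Lemma:gRegularExchange}, together with the quotient collapse $\frac{e_tu_{j+1}}{g(e_tu_{j+1})}\cdot\frac{e_s g(e_tu_{j+1})}{g(e_sg(e_tu_{j+1}))} = \frac{e_se_tu_{j+1}}{g(e_sg(e_tu_{j+1}))}$; mixed pairs (one index in $\supp(u_{j+1})$) cancel against contributions of the first sum of $d^{F^{(j)}}$. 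The sign twists $(-1)^{\deg(\cdot)}$ built into the modified Cartan complexes are calibrated precisely so these cancellations go through cleanly in the anticommutative setting of $E$.

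Once $\psi^{(j)}$ is a comparison map, the mapping cone differential on the new basis elements reads
\[
d^{F^{(j+1)}}(f(a;u_{j+1})) = -d^{C^{(j)}}(x^{(a)}) + \psi^{(j)}(x^{(a)}),
\]
which upon substituting $d^{C^{(j)}} = (-1)^{\deg u_{j+1}}\partial$ is exactly the formula of the theorem; on basis elements $f(a;u)$ with $u \in G(I_j)$ the differential is inherited unchanged from $F^{(j)}$, and the induction closes. The main obstacle I expect is the chain-map verification: the sign bookkeeping under reordering of exterior monomials, together with the double application of $g$ whose compatibility rests entirely on Lemma \ref{Lemma:gRegularExchange}, is the essential delicate point that distinguishes this argument from its polynomial-ring counterpart in \cite[Theorem 1.12]{JTak}.
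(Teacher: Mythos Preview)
Your proposal takes essentially the same approach as the paper: induct along the iterated mapping cone, define the comparison map $\psi^{(j)}$ by the second sum in the claimed formula, and verify it is a chain map, with Lemma~\ref{Lemma:gRegularExchange} providing the internal cancellation of the double-$g$ terms appearing in $(d\circ\psi)$. One small correction: to be consistent with $d(f(0;u)) = (-1)^{\deg u}u$ and to make the degree-one square commute, you need $\psi^{(j)}_0(1) = (-1)^{\deg u_{j+1}} u_{j+1}$ rather than $u_{j+1}$.
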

\begin{proof}
	Let $I$ have linear quotients with respect to the sequence $u_1,\dots,u_r$. We proceed by induction on $r=|G(I)|$. For $r=1$, $I=(u_1)=(u)$ is a principal ideal and the Cartan complex $(-1)^{\deg(u)}C_{\cdot}(e_{k_1},\dots,e_{k_\ell};E)$ with $\set(u)=\supp(u)=\{k_1,\dots,k_\ell\}$ provides a minimal free resolution of $E/I$. In this case, the maps are as required.
	
	Now, suppose that $r>1$ and that our statement holds for $r-1$. As before, we set $I_{r-1} = (u_1, \ldots, u_{r-1})$ and $L_{r-1} = (u_1, \ldots, u_{r-1}):(u_{r})$, then we get the following exact sequences of $E$-modules
	\[0 \rightarrow E/L_{r-1} \longrightarrow E/I_{r-1} \longrightarrow E/I\rightarrow 0,\]
	where $E/L_{r-1} \longrightarrow E/I_{r-1}$ is multiplication by $u_{r}$. Let $F^{(r-1)}$ be the graded minimal free resolution of $E/I_{r-1}$, $C^{(r-1)}=(-1)^{\deg(u_r)}C.(e_{k_1}, \ldots, e_{k_\ell}; E)$ the Cartan complex for the sequence $e_{k_1}, \ldots, e_{k_\ell}$ with $\set(u_{r})=\{k_1,\dots,k_\ell\}$.
	
	Let $F$ be the mapping cone of $\psi^{(r-1)}:C^{(r-1)}\rightarrow F^{(r-1)}$ and for simplicity set $\psi^{(r-1)}=\psi$ and $u_r=u$. Since $F^{(r-1)}$ is a subcomplex of $F$, it follows from our inductive hypothesis that the formula for the map given in the statement holds for all $f(a;u_j)$ with $j<r$. Hence, it remains to check it for $f(a;u_r)=f(a;u)$.
	
	By the definition of the mapping cone of $\psi$,
	$d(f(a;u))=-\partial(f(a;u))+\psi(f(a;u))$. Thus, to prove the asserted formula it is enough to show that we can define $\psi$ as
	\[
	\psi(f(a;u))=\sum_{t\in\supp(a)\setminus\supp(u)}(-1)^{\deg(g(e_tu))}\frac{e_tu}{g(e_tu)}f(a-\varepsilon_t;g(e_tu))
	\]
	if $a\ne0$ and $\psi(f(0;u))=(-1)^{\deg(u)}u$, otherwise. That is, the following diagram is commutative
	\[
	\xymatrixcolsep{2.5pc}\xymatrix{
		C^{(r-1)}:\cdots\ar[r] &C_i \ar[d]_{\psi_{i}}\ar[r]^{\partial_i} & C_{i-1} \ar[d]^{\psi_{i-1}}\ar[r] & \cdots\\
		F^{(r-1)}:\cdots\ar[r] & F_i^{(r-1)} \ar[r]_-{d_i} & F_{i-1}^{(r-1)}\ar[r]&\cdots
	}\]

	Here, with abuse of notation, we also let $d$ denote the chain map of $F^{(r-1)}$.
	
	Let $t\in\set(u)$. Then
	\[
	(\psi\circ\partial)(f(\varepsilon_t;u))\ =\ \psi((-1)^{\deg(u)}e_tf(0;u))\ =\ (-1)^{\deg(u)}(-1)^{\deg(u)}e_tu=e_tu.
	\]
	On the other hand, $d(f(0;g(e_tu)))=(-1)^{\deg(g(e_tu))}g(e_tu)$. Hence, if $t\notin\supp(u)$, we have that
	
	\begin{align*}
	(d\circ\psi)(f(\varepsilon_t;u))\ &=\ (-1)^{\deg(g(e_tu))}\frac{e_tu}{g(e_tu)}d(f(0;g(e_tu)))\\
	&=\ \frac{e_tu}{g(e_tu)}g(e_tu)=e_tu.
	\end{align*}
	Otherwise, if $t\in\supp(u)$, then $\psi(f(\varepsilon_t;u))=0$. In this case, $(d\circ\psi)(f(\varepsilon_t;u))=0$ but also $(\psi\circ\partial)(f(\varepsilon_t;u))=e_tu=0$.
	So the first square in the diagram commutes.\medskip
	
	For the other squares, let $f(a;u)$ with $\supp(a)\subseteq\set(u)$ and $|a|>1$. Then
	
	\begin{align*}
	(\psi\circ\partial)(f(a;u)) &=\sum_{t\in\supp(a)}(-1)^{\deg(u)}e_t\psi(f(a-\varepsilon_t;u))\\
	&=\sum_{\substack{t\in\supp(a)\\t\notin\supp(u)}}\sum_{\substack{s\in\supp(a)\setminus\{t\}\\ s\notin\supp(u)}}(-1)^{\deg(u)+\deg(g(e_su))}\frac{e_te_su}{g(e_su)}f(a-\varepsilon_t-\varepsilon_s;g(e_su))\\
	&=\sum_{\substack{t\in\supp(a)\setminus\supp(u)\\s\in\supp(a)\setminus(\supp(u)\cup\{t\})\\s>t}}(-1)^{\deg(u)+\deg(g(e_su))}\frac{e_te_su}{g(e_su)}f(a-\varepsilon_t-\varepsilon_s;g(e_su))\\
	&-\sum_{\substack{t\in\supp(a)\setminus\supp(u)\\s\in\supp(a)\setminus(\supp(u)\cup\{t\})\\s<t}}(-1)^{\deg(u)+\deg(g(e_su))}\frac{e_se_tu}{g(e_su)}f(a-\varepsilon_t-\varepsilon_s;g(e_su)).
	\end{align*}
	
	Here, to get the second equality, we have noted that if $s=t$ or $t\in\supp(u)$ or $s\in\supp(u)$, the corresponding term in the sum is zero. Exchanging the role of $t$ and $s$ in the second sum, we obtain
	
	\begin{equation}\label{eq:psi-1circpartiali}
	\begin{aligned}
	(\psi\circ\partial)(f(a;u))&=\sum_{\substack{t,s\in\supp(a)\\t,s\notin\supp(u)\\s>t}}(-1)^{\deg(u)+\deg(g(e_su))}\frac{e_te_su}{g(e_su)}f(a-\varepsilon_t-\varepsilon_s;g(e_su))\\\\
	&-\sum_{\substack{t,s\in\supp(a)\\t,s\notin\supp(u)\\s>t}}(-1)^{\deg(u)+\deg(g(e_tu))}\frac{e_te_su}{g(e_tu)}f(a-\varepsilon_t-\varepsilon_s;g(e_tu)).
	\end{aligned}
	\end{equation}
	
	As for the composition counterclockwise, we have
	\begin{equation*}
	(d\circ\psi)(f(a;u))=\sum_{t\in\supp(a)\setminus\supp(u)}(-1)^{\deg(g(e_tu))} \frac{e_tu}{g(e_tu)}d(f(a-\varepsilon_t;g(e_tu))).
	\end{equation*}
	Let $t\in\supp(a)\setminus\supp(u)$ and assume for a moment that $\supp(a-\varepsilon_t)\subseteq\set(g(e_tu))$. Then,
	{\small\begin{equation}\label{eq:diff}
	\begin{aligned}
	d(f&(a-\varepsilon_t;g(e_tu)))=-\sum_{s\in\supp(a-\varepsilon_t)}(-1)^{\deg(g(e_tu))}e_sf(a-\varepsilon_t-\varepsilon_s;g(e_tu))\\
	&+\sum_{s\in\supp(a-\varepsilon_t)\setminus\supp(g(e_tu))}(-1)^{\deg(g(e_sg(e_tu)))}\frac{e_sg(e_tu)}{g(e_sg(e_tu))}f(a-\varepsilon_t-\varepsilon_s;g(e_sg(e_tu))).
	\end{aligned}
	\end{equation}}
	
	We claim that this expression is also true if $\supp(a-\varepsilon_t)\not\subseteq\set(g(e_tu))$. In this case, the symbol $f(a-\varepsilon_t;g(e_tu))$ is zero. Hence, the right-hand side of (\ref{eq:diff}) should be zero, too.
	
	Indeed, pick $s\in\supp(a-\varepsilon_t)$. If $\supp(a-\varepsilon_t-\varepsilon_s)\not\subseteq\set(g(e_tu))$, then by the regularity of $g$ we have $\set(g(e_sg(e_tu)))\subseteq\set(g(e_tu))$. Therefore $\supp(a-\varepsilon_t-\varepsilon_s)\not\subseteq\set(g(e_sg(e_tu)))$ too, making the right hand side of (\ref{eq:diff}) zero.
	
	Otherwise, $\supp(a-\varepsilon_t-\varepsilon_s)\subseteq\set(g(e_tu))$. In this case $s\notin\set(g(e_tu))$, otherwise we would have $\supp(a-\varepsilon_t)\subseteq\set(g(e_tu))$, against our assumption. Hence, by Lemma \ref{lem:JT}(c) we have that $g(e_sg(e_tu))=g(e_tu)$.  Furthermore, in such a case $s\in\supp(a-\varepsilon_t)\setminus\supp(g(e_tu))$ and so
	{\small \[
	(-1)^{\deg(g(e_sg(e_tu)))}\frac{e_sg(e_tu)}{g(e_sg(e_tu))}f(a-\varepsilon_t-\varepsilon_s;g(e_sg(e_tu)))=(-1)^{\deg(g(e_tu))}e_sf(a-\varepsilon_t-\varepsilon_s;g(e_tu)).
	\]}
	Therefore, we see that the right hand side of (\ref{eq:diff}) is zero, as all summands are either zero or cancel against each other.
	
	Now, observe that
	
	\begin{align*}
	-(-1)^{\deg(g(e_tu))}(-1)^{\deg(g(e_tu))}\frac{e_tu}{g(e_tu)}e_s&=-\frac{e_tu}{g(e_tu)}e_s\\
	&=-(-1)^{\deg(e_tu)-\deg(g(e_tu))}e_s\frac{e_tu}{g(e_tu)}\\
	&=(-1)^{\deg(u)+\deg(g(e_tu))}\frac{e_se_tu}{g(e_tu)}
	\end{align*}
	and 
	\begin{align*}
	&(-1)^{\deg(g(e_tu))+\deg(g(e_sg(e_tu)))}\frac{e_tu}{g(e_tu)}\frac{e_sg(e_tu)}{g(e_sg(e_tu))}
	\\
	=\ &(-1)^{\deg(g(e_tu))+\deg(g(e_sg(e_tu)))}(-1)^{\deg(g(e_tu))}\frac{e_tu}{g(e_tu)}\frac{g(e_tu)e_s}{g(e_sg(e_tu))}
	\\
	=\ &(-1)^{\deg(g(e_sg(e_tu)))}\frac{e_tue_s}{g(e_sg(e_tu))}\ =\ (-1)^{\deg(g(e_sg(e_tu)))+\deg(u)}\frac{e_te_su}{g(e_sg(e_tu))}.
	\end{align*}
	Consequently, by (\ref{eq:diff}) and the previous calculations, we have 
	\begin{align*}
	(d\circ&\psi)(f(a;u))
	=\sum_{\substack{t\in\supp(a)\\s\in\supp(a)\setminus(\supp(u)\cup\{t\})}}(-1)^{\deg(u)+\deg(g(e_tu))}\frac{e_se_tu}{g(e_tu)}f(a-\varepsilon_t-\varepsilon_s;g(e_tu))\\
	&+\sum_{\substack{t\in\supp(a)\\s\in\supp(a-\varepsilon_t)\setminus\supp(g(e_tu))}}\!\!\!\!\!(-1)^{\deg(u)+\deg(g(e_sg(e_tu)))}\frac{e_te_su}{g(e_sg(e_tu))}f(a-\varepsilon_t-\varepsilon_s;g(e_sg(e_tu))).
	\end{align*}
	Note that in the first sum we can write $t\in\supp(a)\setminus\supp(u)$, otherwise the corresponding term in the sum is zero. Likewise, in the second sum, we can write $t\in\supp(a)\setminus\supp(u)$ and moreover $s\in\supp(a)\setminus(\supp(u)\cup\{t\})$. Indeed, if $t\in\set(u)\setminus\supp(u)$, it is clear that $\supp(g(e_tu))\subset\supp(u)\cup\{t\}$. Therefore,
	\[
	\supp(a)\setminus(\supp(u)\cup\{t\})\subseteq\supp(a-\varepsilon_t)\setminus\supp(g(e_tu)).
	\]
	If $s$ belongs to the second set and does not belong to the first one, then $s\in\supp(u)$ and the corresponding summand is zero. Hence, we can rewrite the previous equation as
	\begin{align*}
	(d\circ\psi)(&f(a;u))
	=\sum_{\substack{t\in\supp(a)\setminus\supp(u)\\s\in\supp(a)\setminus(\supp(u)\cup\{t\})}}(-1)^{\deg(u)+\deg(g(e_tu))}\frac{e_se_tu}{g(e_tu)}f(a-\varepsilon_t-\varepsilon_s;g(e_tu))\\
	&+\sum_{\substack{t\in\supp(a)\setminus\supp(u)\\s\in\supp(a)\setminus(\supp(u)\cup\{t\})}}\!\!\!\!\!(-1)^{\deg(u)+\deg(g(e_sg(e_tu)))}\frac{e_te_su}{g(e_sg(e_tu))}f(a-\varepsilon_t-\varepsilon_s;g(e_sg(e_tu))).
	\end{align*}
	Consequently, we get
	\begin{align*}
	(d\circ\psi)(f(a;u))
	&=\sum_{\substack{t,s\in\supp(a)\\t,s\notin\supp(u)\\s<t}}\!\!\!(-1)^{\deg(u)+\deg(g(e_tu))}\frac{e_se_tu}{g(e_tu)}f(a-\varepsilon_t-\varepsilon_s;g(e_tu))\\
	&-\sum_{\substack{t,s\in\supp(a)\\t,s\notin\supp(u)\\s>t}}\!\!\!(-1)^{\deg(u)+\deg(g(e_tu))}\frac{e_te_su}{g(e_tu)}f(a-\varepsilon_t-\varepsilon_s;g(e_tu))\\
	&+\sum_{\substack{t,s\in\supp(a)\\t,s\notin\supp(u)\\s>t}}\!\!\!(-1)^{\deg(u)+\deg(g(e_sg(e_tu)))}\frac{e_te_su}{g(e_sg(e_tu))}f(a-\varepsilon_t-\varepsilon_s;g(e_sg(e_tu)))\\
	&-\sum_{\substack{t,s\in\supp(a)\\t,s\notin\supp(u)\\s<t}}\!\!\!(-1)^{\deg(u)+\deg(g(e_sg(e_tu)))}\frac{e_se_tu}{g(e_sg(e_tu))}f(a-\varepsilon_t-\varepsilon_s;g(e_sg(e_tu))).
	\end{align*}
	By Lemma \ref{Lemma:gRegularExchange}, the last two sums cancel out by exchanging the roles of $s$ and $t$ in one of them. Moreover, we get an expression identical to (\ref{eq:psi-1circpartiali}) by exchanging the roles of $s$ and $t$ in the first sum. Thus, we see that $(d\circ\psi)(f(a;u))=(\psi\circ\partial)(f(a;u))$. The conclusion follows. 
\end{proof}

\section{An application}\label{sec5}
In this section we introduce the notion of strongly stable vector-spread ideal in the exterior algebra $E = K\langle e_1, \ldots, e_n\rangle$. This notion has been recently introduced in \cite{F1} in the polynomial ring as a generalization of the concept of $t$-spread strongly stable ideal given by Ene, Herzog and Qureshi in \cite{EHQ}.

\begin{Def}\em \label{def:spread}
	Given $n\ge1,{\bf t}=(t_1,\dots,t_{d-1})\in\NN^{d-1},d\ge2$ and $e_\mu=e_{i_1} \cdots e_{i_\ell}\neq 1$ a monomial of $E$, with $1\le i_1< i_2< \cdots < i_\ell\le  n$, $\ell\le d$ we say that $e_\mu$ is \emph{${\bf t}$-spread} if $i_{j+1}-i_j\ge t_j$, for all $j=1,\dots,\ell-1$. We say that a monomial ideal $I$ of $E$ is \emph{${\bf t}$-spread} if it is generated by ${\bf t}$-spread monomials.
\end{Def}

For instance, the monomial ideal $I=(e_1e_3, e_2e_6, e_2e_4e_8)$ of the exterior algebra $E = K\langle e_1, \ldots, e_8\rangle$ is a $(2,2)$-spread monomial ideal, but not a $(3,2)$-spread monomial ideal. \\

Note that any monomial ideal of $E$ is ${\bf 1}$-spread, where ${\bf 1}=(1,\dots,1)\in \NN^n$. The set of all monomials of $E$ will be denoted by $\Mon(E)$, whereas the set of all ${\bf t}$-spread monomials of $E$ will be denote by $\Mon_{\bf t}(E)$. For ${\bf t}={\bf 1}$, $\Mon_{\bf 1}(E)=\Mon(E)$. 
One can quickly observe that if $\ell$ is a positive integer, then there does exist a ${\bf t}$-spread monomial of degree $\ell$ in $E$ if and only if $n\ge 1+\sum_{j=1}^{\ell-1}t_j$. 

\begin{Def}\em\label{def:stronspread}
	A ${\bf t}$-spread monomial ideal $I$ is called \emph{${\bf t}$-spread strongly stable} if for all ${\bf t}$-spread monomial $e_\mu\in I$, all $j\in \supp(e_\mu)$ and all $i< j$, such that $e_ie_{\mu\setminus \{j\}}$ is ${\bf t}$-spread, it follows that $e_ie_{\mu\setminus \{j\}}\in I$.
\end{Def}

For instance, the ideal $I= (e_1e_3, e_1e_4, e_2e_4e_6)$ is a $(2,2)$-spread strongly stable monomial ideal of $E = K\langle e_1, \ldots, e_6\rangle$.

Note that a ${\bf 1}$-spread strongly stable ideal is the ordinary strongly stable ideal introduced in \cite{AHH}.\\

The defining property of a ${\bf t}$-spread strongly stable ideal needs to be checked only for the set of ${\bf t}$-spread monomial generators.
\begin{Prop}\label{rem:stable}  
	Let $I$ be a ${\bf t}$-spread ideal and suppose that for all $e_\mu \in G(I)$, and for all integers $1\le i < j \le n$ such that $j\in \mu$ and  $e_ie_{\mu \setminus \{j\}}$ is ${\bf t}$-spread, one has $e_ie_{\mu \setminus \{j\}} \in I$. Then $I$ is ${\bf t}$-spread strongly stable. 
\end{Prop}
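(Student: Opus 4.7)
The plan is to extend the swap property from generators to all $\mathbf{t}$-spread monomials of $I$ by dividing out a generator and splitting cases. First I would fix a $\mathbf{t}$-spread monomial $e_\nu\in I$ together with integers $i<j$ such that $j\in\nu$ and $e_{\nu'}:=e_ie_{\nu\setminus\{j\}}$ is $\mathbf{t}$-spread (so in particular $i\notin\nu$); the goal is to prove $e_{\nu'}\in I$. Since $I$ is a monomial ideal, some $e_\mu\in G(I)$ divides $e_\nu$, i.e., $\mu\subseteq\nu$, and I split the argument according to whether $j\in\mu$.

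In the first case, $j\notin\mu$, I observe that $\mu\subseteq\nu\setminus\{j\}$ and $i\notin\nu\supseteq\mu$, so $\mu\subseteq(\nu\setminus\{j\})\cup\{i\}=\nu'$; hence $e_\mu$ divides $e_{\nu'}$ up to sign and $e_{\nu'}\in I$ immediately. In the substantive case, $j\in\mu$, I would apply the hypothesis on generators to the triple $(e_\mu,i,j)$, and for this I must check that $e_{\mu'}:=e_ie_{\mu\setminus\{j\}}$ is itself $\mathbf{t}$-spread. Writing $\nu'=\{m_1<\cdots<m_p\}$ and the subset $\mu'\subseteq\nu'$ as $\{m_{q_1}<\cdots<m_{q_r}\}$ with strictly increasing indices $q_k\ge k$, a telescoping computation gives
\[
m_{q_{k+1}}-m_{q_k}\ =\ \sum_{s=q_k}^{q_{k+1}-1}(m_{s+1}-m_s)\ \ge\ \sum_{s=q_k}^{q_{k+1}-1}t_s\ \ge\ t_k,
\]
so $e_{\mu'}$ is $\mathbf{t}$-spread. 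The hypothesis then yields $e_{\mu'}\in I$, and the identity $e_{\nu'}=\pm e_{\mu'}\cdot e_{\nu\setminus\mu}$, whose factors have disjoint supports because $i\notin\nu$ and $\mu\cap(\nu\setminus\mu)=\emptyset$, gives $e_{\nu'}\in I$.

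The main technical step is the sub-monomial $\mathbf{t}$-spread inheritance used in the second case, specifically the final inequality $\sum_{s=q_k}^{q_{k+1}-1}t_s\ge t_k$, which is transparent when the entries of $\mathbf{t}$ are non-decreasing (as in all the running examples) but would otherwise require a more delicate selection of the generator $e_\mu$, possibly by induction on $|\nu\setminus\mu|$ choosing $\mu$ minimally so that $e_{\mu'}$ remains $\mathbf{t}$-spread. All remaining steps reduce to routine exterior-algebra divisibility manipulations.
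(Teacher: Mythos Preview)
Your overall strategy coincides with the paper's: pick a generator $e_\mu\mid e_\nu$ and split on whether $j\in\mu$ or $j\notin\mu$. The paper's argument in the case $j\in\mu$ is a single line: ``$e_ie_{\mu\setminus\{j\}}\in I$ by assumption, and so $e_ie_{\nu\setminus\{j\}}=e_ie_{\mu\setminus\{j\}}e_\rho\in I$''. In particular, the paper does \emph{not} pause to verify that $e_ie_{\mu\setminus\{j\}}$ is itself ${\bf t}$-spread before invoking the hypothesis; you are being more scrupulous here than the paper.

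Your worry about the telescoping inequality $\sum_{s=q_k}^{q_{k+1}-1}t_s\ge t_k$ is well founded and not merely hypothetical. Take ${\bf t}=(3,1)$, $\nu=\{1,4,7\}$, $\mu=\{4,7\}$, $j=7$, $i=5$: then $\nu'=\{1,4,5\}$ is ${\bf t}$-spread while $\mu'=\{4,5\}$ is not, so the hypothesis cannot be applied to this particular $\mu$. Thus both your write-up and the paper's share the same unaddressed point when ${\bf t}$ is not non-decreasing. Your proposed remedy---choosing a better divisor of $e_\nu$, or inducting on $|\nu\setminus\mu|$---is the right instinct (in the example above the generator $e_1e_4$, which must lie in $I$ by the hypothesis applied to $e_4e_7$, already divides $e_{\nu'}$), but neither you nor the paper carries this out.
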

\begin{proof}
	Let $e_\nu \in I$ be a ${\bf t}$-spread monomial and $1 \le i < j \le n$ integers such that $j\in \nu$ and $e_ie_{\nu \setminus \{j\}}$ is ${\bf t}$-spread. There exists $e_\mu \in G(I)$ and a monomial $e_\rho \in E$ such that $e_\nu = e_\mu e_\rho$ in $E$. We distinguish two cases: $j \in \mu$, $j \in \rho$.
	
	If $j \in \mu$, then $e_ie_{\mu \setminus \{j\}}  \in I$ by assumption, and so $e_ie_{\nu \setminus \{j\}} = e_ie_{\mu \setminus \{j\}} e_\rho \in I$.
	
	If $j\in \rho$, then $e_ie_{\nu \setminus \{j\}}  = e_ie_\mu e_{\rho \setminus \{j\}} \in I$.
\end{proof}

In what follows, let ${\bf t}=(t_1,\dots,t_{d-1})\in\NN^{d-1}$, $d\ge2$.

The proof of the next statement is verbatim the same as the corresponding result in the polynomial ring, see \cite[Theorem 2.2]{CF} and \cite[Proposition 5.1]{F1}. 
Thus we omit it.

\begin{Thm}\label{thm:collect}
	Let $I$ be a ${\bf t}$-spread strongly stable ideal. Then $I$ has linear quotients with $G(I)$ ordered with respect to the lexicographic order. In particular, $I$ is componentwise linear.
\end{Thm}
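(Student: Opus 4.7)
Order $G(I) = \{u_1, \ldots, u_r\}$ with $u_1 >_{\mathrm{lex}} u_2 >_{\mathrm{lex}} \cdots >_{\mathrm{lex}} u_r$, and for each $j \geq 2$ I aim to show that $J_j := (u_1, \ldots, u_{j-1}):(u_j)$ is generated by a subset of $\{e_1, \ldots, e_n\}$. Writing $u_j = e_\nu$ with $\nu = \{j_1 < \cdots < j_d\}$, each $e_{j_t}$ annihilates $u_j$ and so lies in $J_j$ automatically. Up to sign, the remaining minimal monomial generators of $J_j$ are the monomials $e_{\mu \setminus \nu}$ arising from each $u_i = e_\mu \in G(I)$ with $i<j$; it suffices to exhibit a variable $e_k \mid e_{\mu \setminus \nu}$ lying in $J_j$.

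Let $s$ be the smallest index at which the sorted supports of $u_i$ and $u_j$ disagree. Lex comparison gives $i_t = j_t$ for $t<s$ and $i_s < j_s$, so $k := i_s = \min(\mu \setminus \nu)$ satisfies $k < j_s$ and $e_k \mid e_{\mu\setminus\nu}$. Consider $v := e_{(\nu \setminus \{j_s\}) \cup \{k\}}$, with sorted support $\{j_1, \ldots, j_{s-1}, k, j_{s+1}, \ldots, j_d\}$. The two gaps modified relative to $\nu$ satisfy $k - j_{s-1} = i_s - i_{s-1} \geq t_{s-1}$ (from $\mu$ being ${\bf t}$-spread) and $j_{s+1} - k > j_{s+1} - j_s \geq t_s$ (from $\nu$ being ${\bf t}$-spread together with $k<j_s$), so $v$ is itself ${\bf t}$-spread. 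Applying Definition \ref{def:stronspread} to $u_j$ with the admissible exchange $j_s \leftrightarrow k$ gives $v \in I$, and since $v$ divides $\pm e_k u_j = e_{\nu \cup \{k\}}$ as monomials, $e_k u_j \in I$.

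The main obstacle is to promote $e_k u_j \in I$ to $e_k u_j \in (u_1, \ldots, u_{j-1})$. Pick any $u_p \in G(I)$ with $u_p \mid v$: since otherwise $\mathrm{supp}(u_p) \subseteq \nu \setminus \{j_s\} \subsetneq \nu$ would force $u_p$ to properly divide $u_j$ (contradicting minimality of $G(I)$), one must have $k \in \mathrm{supp}(u_p)$. Let $m$ be the position of $k$ in the sorted support of $u_p$; its $m-1$ predecessors all lie in $\{j_1, \ldots, j_{s-1}\}$, so $m \leq s$. Assume $m<s$ for contradiction and let $j_\ell$ be the smallest element of $\{j_1, \ldots, j_{s-1}\}$ missing from $\mathrm{supp}(u_p)$; then $\ell \leq m$. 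In the boundary case $\ell = m$, the strongly stable exchange removing $k$ and inserting $j_\ell$ yields a ${\bf t}$-spread monomial in $I$ with support inside $\nu \setminus \{j_s\}$, and any generator dividing it properly divides $u_j$, contradicting minimality. For $\ell < m$, the exchange of $r_\ell$ for $j_\ell$ produces a ${\bf t}$-spread monomial in $I$ still divisible by a generator of $I$ containing $k$, and one iterates this upgrade; the monovariant $(m, -\ell)$ strictly decreases in lex order until one reaches the boundary case and the contradiction is triggered. Hence $m = s$ and $\mathrm{supp}(u_p) = \{j_1, \ldots, j_{s-1}, k\} \cup T$ with $T \subseteq \{j_{s+1}, \ldots, j_d\}$, so $u_p >_{\mathrm{lex}} u_j$ at position $s$; thus $p<j$ and $e_k u_j \in (u_p) \subseteq (u_1, \ldots, u_{j-1})$. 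This completes linear quotients, and componentwise linearity then follows at once from \cite[Theorem 5.4.5]{GK}.
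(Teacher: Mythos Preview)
Your argument is correct, and in fact supplies considerably more detail than the paper does: the authors simply declare that the proof in the polynomial ring (\cite[Theorem~2.2]{CF}, \cite[Proposition~5.1]{F1}) carries over verbatim and omit it. So there is no line-by-line comparison to make; instead let me comment on the difference in strategy.

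The paper's implicit route is to import the polynomial statement and then observe, as done just below the theorem, that for squarefree monomials one has
\[
(u_1,\dots,u_{j-1}):_E(u_j)\ =\ \big(e_i:i\in\supp(u_j)\big)+\big(e_i:i\in\set_S(u_j)\big),
\]
because the monomial generators of the $E$-colon are exactly the $e_{\mu\setminus\nu}$ together with the variables in $\supp(u_j)$, and the polynomial result already guarantees each $e_{\mu\setminus\nu}$ is hit by some $e_k$ with $k\in\set_S(u_j)$. That transfer is essentially one sentence once the polynomial theorem is in hand.

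Your approach is instead a direct, self-contained proof in $E$. The first two paragraphs are exactly the natural reduction: identify $k=i_s=\min(\mu\setminus\nu)$, build the ${\bf t}$-spread monomial $v$ by the exchange $j_s\leftrightarrow k$, and land in $I$ via the strongly stable property. The third paragraph, where you upgrade $e_ku_j\in I$ to $e_ku_j\in(u_1,\dots,u_{j-1})$, is the substantive part. Your iteration with the lexicographic monovariant $(m,-\ell)$ is correct: each step either drops $m$ or, when $m$ is fixed, strictly increases $\ell$, and the terminal case $\ell=m$ produces a generator whose support sits inside $\nu\setminus\{j_s\}$, contradicting minimality of $u_j$. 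The write-up is dense (the symbol $r_\ell$ appears without having been introduced, and the phrase ``still divisible by a generator of $I$ containing $k$'' hides the short argument that otherwise the generator would sit inside $\nu\setminus\{j_s\}$), but the mathematics is sound. What your approach buys is independence from the polynomial-ring literature; what the paper's approach buys is brevity, at the cost of outsourcing the work.
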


Let $S=K[x_1, \ldots, x_n]$ be the standard graded polynomial ring on $n$ variables.  By abuse of notation, and in order to simplify the notation, if $u$ is a monomial in $E$, we denote again by $u$ the corresponding squarefree monomial in $S$. For instance, if $u=e_1e_2e_3\in \Mon(E)$ then we denote by $u$ the squarefree monomial $x_1x_2x_3$ of $S$, too.  Moreover, if $I$ is a monomial ideal in $E$, we will denote again by $I$ the corresponding squarefree ideal in $S$.

Let $I$ be a ${\bf t}$-spread strongly stable ideal of $E$ with $G(I) = \{u_1,\dots,u_r\}$ ordered with respect to the pure lexicographic order \cite{JT}. Using the notation discussed above, $I$ can be considered as a ${\bf t}$-spread strongly stable ideal of $S$ with $G(I) = \{u_1,\dots,u_r\}$ ordered with respect to the pure lexicographic order. Hence, we set
\[
\set_S(u_j)=\{i:x_i\in(u_1,\dots,u_{j-1}):_S(u_j)\}, \quad \set_E(u_j)=\{i:e_i\in(u_1,\dots,u_{j-1}):_E(u_j)\},
\]
for $j=1, \ldots, r$. It is clear that 
\[\set_E(u_j)=\set_S(u_j)\cup\supp(u_j), \quad j=1, \ldots, r.\] 
Now, by \cite[Corollary 2.3]{CF}, if $u_k=e_{j_1}\cdots e_{j_\ell}$, then
\[
\set_S(u_k)=[\m(u_k)-1]\setminus\bigcup\limits_{h=1}^{\ell-1}\ \big[j_h,j_h+(t_h-1)\big],
\]
where $[a,b]=\{c\in\NN:a\le c\le b\}$ and $a,b\in\NN$.

Therefore,
\begin{equation}\label{eq:compset}
\set_E(u_k)=\set_S(u_k)\cup\supp(u_k)=[\m(u_k)]\setminus\bigcup\limits_{h=1}^{\ell-1}\ \big[j_h+1,j_h+(t_h-1)\big]
\end{equation}
and consequentially $|\set_E(u_k)|=\m(u_k)-\sum_{h=1}^{\ell-1}(t_h-1)$.

Theorem \ref{thm:collect} yields the following corollary which allow us to state a formula for computing the graded Betti numbers of a ${\bf t}$-spread strongly stable ideal.

\begin{Cor}\label{cor:formula}
	Let $I$ be a ${\bf t}$-spread strongly stable ideal. Then
	\[
	\beta_{i, i+j}(I)=\sum_{u\in G(I)_j}\binom{\m(u)-\sum_{h=1}^{j-1}(t_h-1)+i-1}{i}.
	\]
\end{Cor}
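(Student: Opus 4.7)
The plan is to reduce the corollary directly to the Betti number formula in Corollary \ref{Cor:formulaBetti} combined with the explicit description of $\set_E(u)$ already worked out just before the corollary.

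First, I would invoke Theorem \ref{thm:collect} to note that a ${\bf t}$-spread strongly stable ideal $I$ has linear quotients with respect to the lexicographic order on $G(I)$. This is exactly the hypothesis needed to apply Corollary \ref{Cor:formulaBetti}, which gives
\[
\beta_{i,i+j}(I)\ =\ \sum_{u\in G(I)_j}\binom{i+|\set(u)|-1}{|\set(u)|-1},
\]
where $\set(u)=\set_E(u)$ is the exterior version of the set associated to $u$ under linear quotients.

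Next, I would substitute the value of $|\set_E(u)|$ computed in the discussion preceding the corollary. Specifically, for $u\in G(I)_j$, so $\deg(u)=j$ and one can write $u=e_{j_1}\cdots e_{j_j}$, formula (\ref{eq:compset}) yields
\[
|\set_E(u)|\ =\ \m(u)-\sum_{h=1}^{j-1}(t_h-1).
\]
Plugging this into the previous display gives
\[
\beta_{i,i+j}(I)\ =\ \sum_{u\in G(I)_j}\binom{\m(u)-\sum_{h=1}^{j-1}(t_h-1)+i-1}{\m(u)-\sum_{h=1}^{j-1}(t_h-1)-1}.
\]

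Finally, I would use the symmetry $\binom{a+b}{a}=\binom{a+b}{b}$ with $a=i$ and $b=\m(u)-\sum_{h=1}^{j-1}(t_h-1)-1$ to rewrite the binomial coefficient in the form stated in the corollary, concluding the proof. There is essentially no obstacle: the work lies entirely in having established Theorem \ref{thm:collect} and formula (\ref{eq:compset}), both of which are already available in the excerpt, so the argument is just the substitution of known quantities.
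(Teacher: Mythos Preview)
Your proposal is correct and follows essentially the same approach as the paper: invoke Theorem~\ref{thm:collect} to get linear quotients, apply Corollary~\ref{Cor:formulaBetti}, and substitute the value of $|\set_E(u)|$ from~(\ref{eq:compset}). The only addition you make is spelling out the binomial symmetry, which the paper leaves implicit.
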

\begin{proof}
	From Theorem \ref{thm:collect}, $I$ has linear quotients. Hence, the assertion follows from Corollary \ref{Cor:formulaBetti} and (\ref{eq:compset}).
\end{proof}

\begin{Rem}\em
	For ${\bf t}={\bf 1}$, the formula in Corollary \ref{cor:formula} becomes the well--known formula (\ref{eq:Bettistable}).
\end{Rem}

\subsection*{Acknowledgment}
We thank the anonymous referees for his/her careful reading and helpful suggestions.

\end{document}